\renewcommand{\Re}{\operatorname{Re}}
 \renewcommand{\a}{\alpha}
\renewcommand{\b}{\beta}
\renewcommand{\(}{\left\(}
\renewcommand{\)}{\right\)}
\renewcommand{\[}{\left\[}
\renewcommand{\]}{\right\]}
\newcommand{\al}{\alpha}
\newcommand{\be}{\beta}
\newcommand{\rz}[1]{\zeta(#1)}
\newcommand{\bracket}[1]{\left( #1 \right)}
\newcommand{\Z}{\mathbb{Z}}
\newcommand{\C}{\mathbb{C}}
\newcommand{\summ}[2]{\sum_{#1}^{#2}}
\newcommand{\I}[1]{\int_{#1 -i\infty}^{#1 +i\infty}}
\newcommand{\ls}[1]{L(#1,\chi_1)}
\numberwithin{equation}{section}
 \theoremstyle{plain}
\newtheorem{theorem}{Theorem}[section]
\newtheorem{lemma}[theorem]{Lemma}
\newtheorem{remark}[]{Remark}
\newtheorem{conjecture}[theorem]{Conjecture}
\newtheorem{corollary}[theorem]{Corollary}
\newtheorem{proposition}[theorem]{Proposition}
\def\proof{\@ifnextchar[{\@oproof}{\@nproof}}
\def\@oproof[#1][#2]{\trivlist\item[\hskip\labelsep\textit{#2 Proof of\
#1.}~]\ignorespaces}
\def\@nproof{\trivlist\item[\hskip\labelsep\textit{Proof.}~]\ignorespaces}
\begin{document}
\title[A new Ramanujan-type identity for $L(2k+1,  \chi_1)$]{A new Ramanujan-type identity for $L(2k+1, \chi_1)$} 

\author{Shashi Chourasiya}
\address{Shashi Chourasiya \\ Department of Mathematics \\
Indian Institute of Technology Indore \\
Simrol,  Indore,  Madhya Pradesh 453552, India.} 
\email{shashich12@gmail.com}

\author{Md Kashif Jamal}
\address{Md Kashif Jamal \\ Department of Mathematics \\
Indian Institute of Technology Indore \\
Simrol,  Indore,  Madhya Pradesh 453552, India.} 
\email{kashif.sxcr@gmail.com,  kashif.jamal@iiti.ac.in}

 \author{Bibekananda Maji}
\address{Bibekananda Maji\\ Department of Mathematics \\
Indian Institute of Technology Indore \\
Simrol,  Indore,  Madhya Pradesh 453552, India.} 
\email{bibek10iitb@gmail.com,  bibekanandamaji@iiti.ac.in}

\thanks{2020 \textit{Mathematics Subject Classification.} Primary 11M06; Secondary 26C10 .\\
\textit{Keywords and phrases.} Riemann zeta function,  odd zeta values, Ramanujan polynomials,  reciprocal polynomials}

\maketitle
\begin{center}
{\it Dedicated to Srinivasa Ramanujan on his $134$th birth anniversary}
\end{center}

\begin{abstract}
One of the celebrated formulas of Ramanujan is about odd zeta values,  which has been studied by many mathematicians over the years.  A notable extension was given by Grosswald in 1972.  Following  Ramanujan's idea,   we rediscovered a Ramanujan-type identity for $\zeta(2k+1)$ that was first established by Malurkar and later by Berndt using different techniques.  In the current paper,  we extend the aforementioned identity of Malurkar and Berndt to derive a new Ramanujan-type identity for $L(2k+1, \chi_1)$, where $\chi_1$ is the principal character modulo prime $p$.  In the process,  we encounter a new family of Ramanujan-type polynomials  and we notice that a particular case of these polynomials has been studied by Lal\'{i}n and Rogers in 2013.  Furthermore,  we establish a character analogue of Grosswald's identity and a few more interesting results inspired from the work of Gun,  Murty and Rath. 

\end{abstract}

\section{introduction}
The Riemann zeta function $\zeta(s)$ is one of the most important special functions in number theory and its theory plays a crucial role for the development of analytic number theory.  In 1734,  Euler established an exact formula for $\zeta(2k)$ in terms of powers of $\pi$ and Bernoulli number $B_{2k}$.  Mainly,  he showed that,  for any natural number $k$, 
	\begin{equation}  \rz{2k}= (-1)^{k+1}\frac{(2 \pi)^{2k} B_{2k}}{2 (2k)!}.    \label{euler's identity} 
\end{equation}
This formula instantly tells us that all even zeta values are transcendental since $\pi$ is transcendental and Bernoulli numbers are rational.  However,  in the literature no such simple explicit formula for $\zeta(2k+1)$ exists and not much is known about the algebraic nature of odd zeta values.  Stunningly,  Roger Apr\'{e}y \cite{apery1,apery2}, in 1959,  proved that $\zeta(3)$ is an irrational number,  but the arithmetic nature of $\zeta(3)$ is not known yet.  
In 2001,  Zudilin \cite{zudilin} proved that atleast one of $\zeta(5)$, $\zeta(7)$, $\zeta(9)$, and $\zeta(11)$ is irrational.  Around the same time,  Rivoal \cite{rivoal},  Ball and Rivoal \cite{ballrivoal} proved that infinitely many odd zeta values are irrational.  These are the current best results in this direction.

Ramanujan's  notebook and lost-notebook contain many intriguing identities and one of the most celebrated identities is about $\zeta(2k+1)$.  The identity is the following: 
\begin{align}
     (4\alpha)^{-k} \left\{ \frac{1}{2} \zeta(2k+1) + \sum_{n=1}^{\infty} \frac{n^{-2k-1}}{e^{2 \alpha n}-1} \right\} &- (-4 \beta)^{-k}  \left\{ \frac{1}{2} \zeta(2k+1) + \sum_{n=1}^{\infty} \frac{n^{-2k-1}}{e^{2 \beta n}-1} \right\} \nonumber \\
     &= \sum_{j=0}^{k+1} (-1)^{j-1} \frac{B_{2j}}{(2j)!} \frac{B_{2k+2-2j}}{(2k+2-2j)!} \alpha^{k+1-j} \beta^{j}, \label{RamaF}
 \end{align}
where $\alpha \beta = \pi^{2}$ with $\alpha, \beta  >0$, and $k \in \Z \backslash \{0\}$.  This identity can be found in Ramanujan's second notebook \cite[p.~173,  Ch.~14,  Entry 21(i)]{rn}  as well as in  the lost notebook \cite[p.~319,  Entry (28)]{lnb}.  In the course of time,  this identity took attention of many mathematicians.  
To know more about this formula  readers are encouraged to see the paper of Berndt and Straub \cite{Mainpaper}.  Recently Dixit and the third author \cite{DM20} established a beautiful generalization of \eqref{RamaF} while extending an identity of Kanemitsu et al. \cite{KTY01} and another generalization to the Hurwitz zeta function can be found in Dixit et. al. \cite{DGKM20}. 
Character analogues of Ramanujan's formula  have been studied by Berndt \cite{Berndt73,  Berndt75} and Bradley \cite{bradley}. 
 Further,  various generalizations of \eqref{RamaF} have been made by many mathematicians,  readers are refer to see \cite{BanerjeeKumar21,  Berndt77, Mainpaper, CCVW21, GuptaMaji}.  

Now setting $\a=\b=\pi$ and replacing $k$ by $2k+1$ in  \eqref{RamaF},  one can obtain the following Lerch's formula for $\zeta(4k+3)$, 
\begin{align}
    \zeta(4k+3) = \pi^{4k+3} 2^{4k+2} \sum_{j=0}^{2k+2} (-1)^{j+1} \frac{B_{2j}}{(2j)!} \frac{B_{4k+4-2j}}{(4k+4-2j)!} - 2 \sum_{n=1}^{\infty}\frac{1}{n^{4k+3}(e^{2\pi n}-1)}. \label{lerch_f}
\end{align}
To obtain \eqref{RamaF},  Ramanujan used the partial fraction expansion of $\cot ( \sqrt{w\alpha}) \coth (\sqrt{w \beta})$,  unfortunately he made an error.  He \cite[p. 171, Ch. 14, Entry 19(i)]{rn},  \cite[p. 318,  Entry (21)]{lnb} offered the below partial fraction decomposition 
\begin{equation} 
    \pi^2 xy \cot(\pi x)\coth(\pi y) = 1 + 2 \pi xy\summ{n=1}{\infty}{\frac{n \coth{ \big(  \frac{\pi n x }{y} \big) } }{n^2 + y^2} -2 \pi xy \summ{n=1}{\infty}{\frac{n \coth{ \big( \frac{\pi n y}{x} \big) }}{n^2-x^2}}  }, \label{Rama Cot In}
\end{equation} 
in which the two infinite series on the right hand side diverge individually.  The corrected version of the above partial fraction formula was later established by R. Sitaramchandrarao \cite{sitaram} as below:
 \begin{align}
     \pi^{2} xy \cot (\pi x) \coth{ (\pi y ) }  & = 1+ \frac{\pi^{2}}{3} ( y^{2}-x^{2}) \nonumber \\
     & - 2 \pi xy \sum_{m=1}^{\infty} \left( \frac{y^{2} \coth{  \big( \frac{\pi m x}{y} \big) }}{m(m^{2}+y^{2} ) } + \frac{x^{2} \coth{ \big( \frac{\pi m y}{x} \big) }}{m(m^{2}-x^{2})} \right).
 \end{align}
On replacing $\pi x$ by $ \sqrt{w \alpha }$ and $\pi y$ by $ \sqrt{w \beta}$,  followed by some elementary calculations and comparing the coefficients of $w^{k}$, for $k \geq 1$, identity $\eqref{RamaF}$ can be easily obtained. The first published proof of \eqref{RamaF} was given by Malurkar \cite{Malurkar25} in 1925. 
One of the notable extensions of the Ramanujan's formula \eqref{RamaF} was given by Emil Grosswald \cite{grosswald} in 1972.  For any $z$ lying in the upper half plane $\mathbb{H}$, 
 \begin{align}
     F_{2k+1}(z)- z^{2k} F_{2k+1} \left(-\frac{1}{z} \right) &= \frac{1}{2} \zeta(2k+1) ( z^{2k}-1) \nonumber \\
     &  + \frac{(2 \pi i )^{2k+1}}{2z} \sum_{j=0}^{k+1} z^{2k+2-2j} \frac{B_{2j}}{(2j)!} \frac{B_{2k+2-2j}}{(2k+2-2j)!}, \label{grosswald_eq}
 \end{align}
where $F_{k}(z)= \sum_{n=1}^{\infty} \sigma_{-k}(n) e^{2 \pi i  n z}$ with $\sigma_{-k}(n)= \sum\limits_{ d|n  } d^{-k}$.  Setting $z= i \beta / \pi $,  $\alpha \beta = \pi^2$,  with $\a, \b >0$,  Grosswald's identity immediately gives Ramanujan's identity \eqref{RamaF} for odd zeta values.  More importantly,  the above identity establishes a connection with the Eisenstein series $E_{2k}(z)$ for the full modular group $SL_2(\mathbb{Z})$ since the Fourier series expansion suggests that,  for any $k \geq 2$, 
\begin{align*}
E_{2k}(z) = 1- \frac{4k}{B_{2k}} F_{1-2k}(z). 
\end{align*}
Again,  for $k\geq 3$ odd,  we can think $F_k(z)$ as an Eichler integral of the first kind.  To know more about this connection,  readers are encouraged to see \cite{BerndtStraub16,  goldstein,GunMP}. 
The polynomial on the right hand side of \eqref{grosswald_eq} has been studied by Gun,  Murty and Rath \cite{GunMP} and they termed it as \emph{Ramanujan's polynomial},  namely,  
 \begin{equation}\label{Ramanujan polynomial}
     R_{2k+1}(z)=\sum_{j=0}^{k+1} z^{2k+2-2j} \frac{B_{2j}}{(2j)!} \frac{B_{2k+2-2j}}{(2k+2-2j)!}. 
 \end{equation}
Murty, Smith and Wang \cite{ZofR}  proved that  $R_{2k+1}(z)$ has only unimodular roots with multiplicity $1$ apart from four real roots, for $k \geq 4$.  
 More specifically, they have shown that the only zeros of $R_{2k+1}(z)$  that are roots of unity  at $\pm i$ which occur only if $ k$ is an even natural number, and $ \pm \rho, \pm \rho^{2}$ with
$\rho= e^{2 \pi i/3}$ if and only if $3$ divides $k$. 
This indicates that there are zeros of $R_{2k+1}(z)$ that are lying  in $\mathbb{H}$ and not $2k$-th roots of unity.  Therefore,  Grosswald's identity $\eqref{grosswald_eq}$ leads to 
\begin{align}
    \zeta(2k+1)=\frac{2}{z^{2k}-1} \left(F_{2k+1}(z) - z^{2k} F_{2k+1} \left(- \frac{1}{z}\right) \right)
\end{align}
for some $z \in \mathbb{H} \cap \overline{\mathbb{Q}}$.  The above observation inspired Gun, Murty and Rath \cite{GunMP}  to study the nature of the special values of the function $G_{2k+1}(z)$ defined as
\begin{align}
    G_{2k+1}(z):=\frac{1}{z^{2k}-1} \left(F_{2k+1}(z) - z^{2k} F_{2k+1} \left(- \frac{1}{z}\right) \right).
\end{align}
They obtained the following result. 

Let $k \in \mathbb{N} \cup \{ 0\}$.  Define $\delta_k = 0, 1, 2, 3$ accordingly as the
gcd$(k, 6)$ equals $1, 2, 3$ or $6$, respectively. 
For any algebraic  $z \in \mathbb{H}$, the quantity
\begin{equation*}
F_{2k+1}(z) - z^{2k}F_{2k+1}\left( - \frac{1}{z}\right)
\end{equation*}
is transcendental with at most $2k +2+ \delta_k$ exceptions. 


Ramanujan's \cite[p. 171, Ch. 14, Entry 19(i)]{rn},  \cite[p. 318,  Entry (21)]{lnb} main idea was to use the partial fraction decomposition \eqref{Rama Cot In} to derive the formula \eqref{RamaF} for $\zeta(2m+1)$.  Subsequent to that on the same page,  he \cite[p. 171, Ch. 14, Entry 19(iii)]{rn} offered the following partial fraction decomposition for product of two tangent functions: \\
\emph{
Let $x$ and $y$ be complex numbers such that $y/x$ is not purely imaginary. Then
\begin{align}
    \frac{\pi}{4} \tan \left(\frac{\pi x}{2} \right) \tanh \left(\frac{\pi y}{2} \right)= y^{2}  \summ{n=0}{\infty}{\frac{ \tanh\left((2n+1) \frac{\pi  x}{ 2y}\right)}{(2n+1)\{(2n+1)^2 + y^2\}} + x^{2}\summ{n=0}{\infty}{\frac{  \tanh\left((2n+1) \frac{\pi  y}{ 2x}\right)}{(2n+1) \{(2n+1)^2-x^2 \}}}  }. \label{Ram Tan Id}
\end{align} }
In the next section,  we state all the main results on this paper.  
\section{Main results}
Quite surprisingly,  the above formula \eqref{Ram Tan Id} gives us a  Ramanujan-type identity for $\zeta(2k+1)$.     
\begin{theorem}\label{thm_analogue_ramanujan}
Let $\alpha$ and $\beta$ be two positive real numbers such that $ \alpha \beta = \frac{\pi^{2}}{4}$.  For any integer $k \geq 1$,  we have
\begin{align}
   & (4\alpha)^{-k}\left(\frac{1}{2} \zeta(2k+1)\bracket{1-2^{-2k-1}} - \sum_{n=0}^{\infty} \frac{(2n+1)^{-2k-1}}{(e^{2(2n+1)\alpha}+1) }\right)  \nonumber\\
     &-(-4\beta)^{-k} \left( \frac{1}{2} \zeta(2k+1)\bracket{1-2^{-2k-1}} -  \sum_{n=0}^{\infty}\frac{(2n+1)^{-2k-1}}{(e^{2(2n+1)\beta}+1) } \right) \nonumber \\
     & = \sum_{j=1}^{k}  (-1)^{j-1} (2^{2j}-1)  (2^{2k+2-2j}-1)\frac{B_{2j}}{(2j)!} \frac{B_{2k+2-2j}}{(2k+2-2j)! } \alpha^{k+1-j}\beta^{j} \label{eq_analogue_ramanujan}.
 \end{align}
\end{theorem}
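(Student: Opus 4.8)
The plan is to imitate the classical derivation of \eqref{RamaF} from the corrected partial fraction expansion of $\cot(\pi x)\coth(\pi y)$, replacing that expansion by Ramanujan's formula \eqref{Ram Tan Id} for $\tan(\pi x/2)\tanh(\pi y/2)$. First I would introduce an auxiliary variable $w$ and substitute in \eqref{Ram Tan Id} the values $x=\frac{2}{\pi}\sqrt{\beta w}$ and $y=\frac{2}{\pi}\sqrt{\alpha w}$, so that $\frac{\pi x}{2}=\sqrt{\beta w}$, $\frac{\pi y}{2}=\sqrt{\alpha w}$, and (using $\alpha\beta=\pi^{2}/4$) $\frac{\pi x}{2y}=\beta$, $\frac{\pi y}{2x}=\alpha$, while $x^{2}=\frac{4\beta w}{\pi^{2}}$ and $y^{2}=\frac{4\alpha w}{\pi^{2}}$. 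With this choice the arguments of the hyperbolic tangents on the right of \eqref{Ram Tan Id} become the $w$-independent quantities $(2n+1)\beta$ and $(2n+1)\alpha$. For $w$ in a small punctured disc about the origin all hypotheses of \eqref{Ram Tan Id} are met ($y/x=\sqrt{\alpha/\beta}$ is real and positive, no pole of $\tan$ is hit, and $|x^{2}|,|y^{2}|<1$), and since $\sqrt{\alpha\beta}=\pi/2$ both sides become functions of $w$ that are analytic at $0$ and vanish there to first order, so comparing their Taylor coefficients will be legitimate.

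Next I would expand both sides as Taylor series in $w$ and compare the coefficient of $w^{k}$. On the left, $\frac{\pi}{4}\tan(\sqrt{\beta w})\tanh(\sqrt{\alpha w})$ is treated via the classical Bernoulli expansions $\tan z=\sum_{n\ge1}\frac{(-1)^{n-1}2^{2n}(2^{2n}-1)B_{2n}}{(2n)!}z^{2n-1}$ and $\tanh z=\sum_{n\ge1}\frac{2^{2n}(2^{2n}-1)B_{2n}}{(2n)!}z^{2n-1}$; multiplying the two series, collecting the coefficient of $w^{k}$, and using $2^{2j}\cdot2^{2(k+1-j)}=2^{2k+2}$ together with $\beta^{j-1/2}\alpha^{k+1-j-1/2}=\frac{2}{\pi}\alpha^{k+1-j}\beta^{j}$ (again from $\sqrt{\alpha\beta}=\pi/2$), one finds that this coefficient equals exactly $2\cdot4^{k}$ times the right-hand side of \eqref{eq_analogue_ramanujan}. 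On the right, $\tanh((2n+1)\beta)$ and $\tanh((2n+1)\alpha)$ are constant in $w$; expanding $\frac{1}{(2n+1)^{2}+y^{2}}$ and $\frac{1}{(2n+1)^{2}-x^{2}}$ as geometric series in $w$ and interchanging the summations, the coefficient of $w^{k}$ comes out to $(-1)^{k-1}\beta^{-k}\sum_{n\ge0}\frac{\tanh((2n+1)\beta)}{(2n+1)^{2k+1}}+\alpha^{-k}\sum_{n\ge0}\frac{\tanh((2n+1)\alpha)}{(2n+1)^{2k+1}}$, where I have used the relations $\frac{4\alpha}{\pi^{2}}=\frac{1}{\beta}$ and $\frac{4\beta}{\pi^{2}}=\frac{1}{\alpha}$ coming from $\alpha\beta=\pi^{2}/4$.

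To finish, I would write $\tanh((2n+1)\beta)=1-\frac{2}{e^{2(2n+1)\beta}+1}$ (and likewise for $\alpha$) and use $\sum_{n\ge0}(2n+1)^{-(2k+1)}=(1-2^{-2k-1})\zeta(2k+1)$, so that $\sum_{n\ge0}\frac{\tanh((2n+1)\beta)}{(2n+1)^{2k+1}}=(1-2^{-2k-1})\zeta(2k+1)-2\sum_{n\ge0}\frac{(2n+1)^{-2k-1}}{e^{2(2n+1)\beta}+1}$. Dividing the resulting coefficient identity by $2\cdot4^{k}$, recognising $\frac{\beta^{-k}}{4^{k}}=(4\beta)^{-k}$ and $\frac{\alpha^{-k}}{4^{k}}=(4\alpha)^{-k}$, and using $(-1)^{k-1}(4\beta)^{-k}=-(-4\beta)^{-k}$, converts it into precisely \eqref{eq_analogue_ramanujan}. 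The step that calls for genuine care, rather than routine bookkeeping, is the term-by-term handling of the right-hand side: one must check that, after the geometric expansion, the double series over $n$ and the geometric index converges absolutely on a fixed disc $|w|<r$ — which holds because $|\tanh((2n+1)\beta)|\le1$ and the inner $n$-sum decays like $(2n+1)^{-(2m+3)}$ — so that the interchange of summations and the extraction of the coefficient of $w^{k}$ are justified, and likewise that splitting $\sum_{n\ge0}\tanh((2n+1)\beta)(2n+1)^{-(2k+1)}$ into a zeta value minus an exponentially convergent remainder is legitimate.
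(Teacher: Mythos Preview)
Your proposal is correct and follows essentially the same route as the paper's own proof: substitute into Ramanujan's partial fraction identity \eqref{Ram Tan Id} so that the $\tanh$ arguments on the right become $w$-independent, expand both sides as power series in $w$ using the Bernoulli expansions of $\tan$ and $\tanh$ on the left and geometric series on the right, compare the coefficient of $w^{k}$, and then rewrite $\tanh$ via $1-\tfrac{2}{e^{2z}+1}$ together with $\sum_{n\ge0}(2n+1)^{-(2k+1)}=(1-2^{-2k-1})\zeta(2k+1)$. The only cosmetic difference is that you take $\tfrac{\pi x}{2}=\sqrt{\beta w}$, $\tfrac{\pi y}{2}=\sqrt{\alpha w}$ whereas the paper takes $\tfrac{\pi x}{2}=\sqrt{w\alpha}$, $\tfrac{\pi y}{2}=\sqrt{w\beta}$; your added remarks on absolute convergence make the coefficient extraction slightly more explicit than in the paper, but the argument is the same.
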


\begin{remark} This identity was first proved by Malurkar \cite{Malurkar25},  although his derivation was completely based on complex analysis technique.  In 1978,  Berndt \cite[Theorem 4.7]{Berndt78} also recovered the same identity while studying a generalized Eisenstien series.   
Recently,  Dixit and Gupta \cite[Corollary 4.2]{DG-21} have also obtained the above identity,  while studying Koshliakov zeta function.  
Here we would like to emphasise that the above theorem is not only true for positive integers $k$,  but also true for negative integers $k$.  In the later case,  one must consider the finite sum involving Bernoulli numbers as an empty sum.  
\end{remark}

\begin{remark} An alternative form of the above result is the following identity: 
    \begin{align}
   & (4 \alpha)^{-k} \sum_{n=0}^{\infty} \frac{\tanh{((2n+1) \alpha})}{(2n+1)^{2k+1}} - ( -4  \beta)^{-k}\sum_{n=0}^{\infty} \frac{\tanh{((2n+1) \beta})}{(2n+1)^{2k+1}}   \nonumber \\
     &  =  2 \sum_{j=1}^{k}  (-1)^{j-1} (2^{2j}-1)  (2^{2k+2-2j}-1)\frac{B_{2j}}{(2j)!} \frac{B_{2k+2-2j}}{(2k+2-2j)! } \alpha^{k+1-j}\beta^{j}. \label{Ramana}
 \end{align}
 \end{remark}
 As an immediate implication of \eqref{Ramana},  we obtain an exact evaluation of an infinite series associated to tan-hyperbolic function. 
\begin{corollary}\label{tan particular case}
For $k \geq 0$, we have
 \begin{align}
    \sum_{n=0}^{\infty} \frac{\tanh \left((2n+1) \frac{\pi}{2} \right)}{(2n+1)^{4k+3}} &= \frac{\pi^{4k+3}}{2} \sum_{j=1}^{2k+1}  (-1)^{j-1} (2^{2j}-1)  (2^{4k+4-2j}-1)\frac{B_{2j}}{(2j)!} \frac{B_{4k+4-2j}}{(4k+4-2j)! }.
    \label{Tan particular case}
 \end{align}
 \end{corollary}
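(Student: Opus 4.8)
The plan is to read off Corollary~\ref{tan particular case} directly from the alternative form \eqref{Ramana} of Theorem~\ref{thm_analogue_ramanujan} by a single specialization of the parameters. First I would replace $k$ by the odd integer $2k+1$ throughout \eqref{Ramana}: the exponent $2k+1$ in the two $\tanh$-series becomes $4k+3$, the upper limit of the finite Bernoulli sum becomes $2k+1$, the factor $(2^{2k+2-2j}-1)$ becomes $(2^{4k+4-2j}-1)$, and the monomial $\alpha^{k+1-j}\beta^{j}$ becomes $\alpha^{2k+2-j}\beta^{j}$. Next I would put $\alpha=\beta=\pi/2$, which is legitimate since $\alpha\beta=\pi^{2}/4$; then $\alpha^{2k+2-j}\beta^{j}=(\pi/2)^{2k+2}$ no longer depends on $j$ and factors out of the sum, leaving precisely the Bernoulli sum appearing in \eqref{Tan particular case}.

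The only step that requires genuine care — and is about as close to an obstacle as this argument gets — is checking that the left-hand side does not collapse to the trivial identity $0=0$ when $\alpha=\beta$. Writing $S=\sum_{n=0}^{\infty}\tanh\!\left((2n+1)\pi/2\right)/(2n+1)^{4k+3}$, the left-hand side of the specialized identity equals $\left((4\alpha)^{-(2k+1)}-(-4\alpha)^{-(2k+1)}\right)S$ with $\alpha=\pi/2$. Because $2k+1$ is odd, $(-4\alpha)^{-(2k+1)}=-(4\alpha)^{-(2k+1)}$, so the two terms reinforce rather than cancel, and the prefactor is $2(2\pi)^{-(2k+1)}$. (Taking $k$ even with $\alpha=\beta$ would instead make both sides of \eqref{Ramana} vanish identically, which explains why it is exactly the argument $4k+3$ — the odd index $2k+1$ — that this degeneration makes accessible.)

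It then remains to equate the two sides, $2(2\pi)^{-(2k+1)}S = 2(\pi/2)^{2k+2}\,\Sigma$, where $\Sigma$ denotes the Bernoulli sum, and to solve for $S$: the constant is $(2\pi)^{2k+1}(\pi/2)^{2k+2}=2^{2k+1}\pi^{2k+1}\cdot\pi^{2k+2}2^{-2k-2}=\pi^{4k+3}/2$, which is precisely \eqref{Tan particular case}. No further difficulty arises — the proof is a substitution followed by bookkeeping of powers of $2$ and $\pi$ — and the boundary case $k=0$ is covered since it only invokes \eqref{Ramana} at $k=1$, within its stated range $k\geq1$.
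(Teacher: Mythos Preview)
Your proposal is correct and follows exactly the paper's approach: the paper's own proof is the single sentence ``Letting $\alpha=\beta=\pi/2$, replacing $k$ by $2k+1$ in \eqref{Ramana} yields \eqref{Tan particular case},'' and you have simply spelled out the bookkeeping that this entails. Your observation about why the odd index $2k+1$ is needed (so that the two $\tanh$-sums reinforce rather than cancel) is a helpful clarification that the paper leaves implicit.
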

Special cases $k=0$ and $ k=1$  of \eqref{Tan particular case} are noted down by Ramanujan  as Entry $25$  $(iii)$, $(iv)$ in Chapter $14$ of his second notebook \cite[p.~ 180]{rn}.  
The above identity has been also proved by many mathematicians and to know more about this identity we refer to \cite{Berndt78}.  
This identity immediately implies that the above infinite series converges to a transcendental number.  More generally,  Berndt \cite[Theorem 4.11]{Berndt78} proved that $\sum_{n=0}^{\infty} \frac{\tanh \left((2n+1) \frac{\pi \theta}{2} \right)}{(2n+1)^{2k+1}} $ is a transcendental number,  when $\theta$ is some certain quadratic irrational number.  Similar behaviour for other trigonometric Dirichlet series have been studied by Lal\'{i}n et al.  \cite{LRR14} and Straub \cite{Straub16}.  
The identity \eqref{Tan particular case} also provides a beautiful formula for $\zeta(4k+3)$ analogous to  Lerch's famous identity \eqref{lerch_f}.  For any $k \geq 0$,  
\begin{align}
      \zeta(4k+3)\left(1-\frac{1}{2^{4k+3}} \right) &= \frac{\pi^{4k+3}}{2} \sum_{n=1}^{2k+1}  (-1)^{n+1} (2^{2n}-1)  (2^{4k+4-2n}-1)\frac{B_{2n}}{(2n)!} \frac{B_{4k+4-2n}}{(4k+4-2n)! } \nonumber \\
      & + 2\sum_{n=0}^{\infty} \frac{(2n+1)^{-4k-3}}{(e^{(2n+1)\pi}+1)}.
      \label{eq_analogue_lerch}
\end{align} 

Now we would like to highlight that Theorem  \ref{thm_analogue_ramanujan} can also  be derived using contour integration technique and while doing so we realized that a more general version of it is indeed true.  Here we state a one-variable generalization of Theorem $\ref{thm_analogue_ramanujan}$.

\begin{theorem} \label{thm_gen_rama_analogue}
Let $p$ be a prime number and $\al$, $\be > 0$ such that $\al \be = \frac{\pi^2}{p^2}$.  Let $\chi_1$ denote the principal Dirichlet character modulo $p$.  
 Then for any $k \in \mathbb{Z} \backslash \{0\}$,  we have
 \begin{align}
    &(4 \al)^{-k}\bracket{  \frac{p-1}{2}L(2k+1,\chi_1)- \summ{n=1}{\infty}{a_n \left( \sum_{d|n} \frac{\chi_1(d)}{d^{2k+1}} \right) }e^{-2n \al}  } \nonumber \\
    &  -(-4 \be)^{-k}\bracket{  \frac{p-1}{2}L(2k+1,\chi_1)- \summ{n=1}{\infty}{a_n \bracket{ \sum_{d|n} \frac{\chi_1(d)}{d^{2k+1}} } }e^{-2n \be}  } \nonumber \\
    &=\summ{j=1}{k}  {(-1)^{j-1}(p^{2j}-1)(p^{2k+2-2j}-1)\frac{B_{2j}}{(2j)!}\frac{B_{2k+2-2j}}{(2k+2-2j)!}\al^{k+1-j}\be^j }, \label{eqn_gen_rama_analogue}
\end{align}
 where $L(s, \chi)$ is the Dirichlet $L$-function and
 \begin{align}\label{a_n}
 a_n = \begin{cases} 1, & \text{if}\,\, \gcd(n,p)=1,  \\
                    1-p,  & \text{if} \,\, \gcd(n,p)=p.
 \end{cases}
 \end{align}
\end{theorem}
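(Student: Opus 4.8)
The plan is to reduce \eqref{eqn_gen_rama_analogue} to the classical Ramanujan identity \eqref{RamaF}, applied three times. The first step is to disassemble the twisted Lambert series on the left. For $\gamma>0$ put
\begin{equation*}
\Phi(\gamma):=\sum_{n=1}^{\infty} a_n\Bigl(\sum_{d\mid n}\frac{\chi_1(d)}{d^{2k+1}}\Bigr)e^{-2n\gamma},\qquad S(\gamma):=\sum_{m=1}^{\infty}\frac{m^{-2k-1}}{e^{2\gamma m}-1}.
\end{equation*}
Interchanging the absolutely convergent sums over $n$ and over the divisors $d\mid n$ and writing $n=dm$, one uses that $\chi_1(d)\neq 0$ forces $p\nmid d$, hence $\gcd(dm,p)=\gcd(m,p)$; the inner sum over $m$ then splits according to whether $p\nmid m$ or $p\mid m$ and collapses by geometric summation. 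Splitting the remaining sum over $d$ once more into its $p\nmid d$ and $p\mid d$ parts, I expect to reach the clean identity
\begin{equation*}
\Phi(\gamma)=S(\gamma)-\bigl(p+p^{-2k-1}\bigr)S(p\gamma)+p^{-2k}S(p^{2}\gamma).
\end{equation*}

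Since $\alpha\beta=\pi^{2}/p^{2}$, each of the three pairs $(\alpha,p^{2}\beta)$, $(p\alpha,p\beta)$ and $(p^{2}\alpha,\beta)$ has product $\pi^{2}$, so \eqref{RamaF} applies to each of them (reading the Bernoulli sum as empty when $k<0$, as the convention attached to \eqref{RamaF} allows). I would substitute the formula for $\Phi$ from the first step, together with the elementary relation $L(2k+1,\chi_1)=\bigl(1-p^{-2k-1}\bigr)\zeta(2k+1)$, into the left-hand side of \eqref{eqn_gen_rama_analogue}; using $(4p^{2}\beta)^{-k}=p^{-2k}(4\beta)^{-k}$, $(4p\alpha)^{-k}=p^{-k}(4\alpha)^{-k}$, and so on, the $S(\cdot)$-terms regroup exactly into these three Ramanujan pairs, each of which can then be replaced by its closed form from \eqref{RamaF}.

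What is left is purely algebraic, and two cancellations finish it. First, the terms proportional to $\zeta(2k+1)$ produced by the three instances of \eqref{RamaF}, together with the explicit $\tfrac{p-1}{2}L(2k+1,\chi_1)$ terms, carry total coefficient
\begin{equation*}
\frac{p-1}{2}\bigl(1-p^{-2k-1}\bigr)+\frac12\bigl(1+p^{-2k}-p^{-2k-1}-p\bigr),
\end{equation*}
which is identically zero; this is exactly what forces the normalization $\tfrac{p-1}{2}$. Second, writing $c_j:=(-1)^{j-1}\frac{B_{2j}}{(2j)!}\frac{B_{2k+2-2j}}{(2k+2-2j)!}\alpha^{k+1-j}\beta^{j}$, the polynomial parts of the three instances combine into
\begin{equation*}
\sum_{j=0}^{k+1}\bigl(1-p^{2j}-p^{2k+2-2j}+p^{2k+2}\bigr)c_j=\sum_{j=0}^{k+1}(p^{2j}-1)(p^{2k+2-2j}-1)\,c_j,
\end{equation*}
whose $j=0$ and $j=k+1$ summands vanish; the surviving sum over $1\le j\le k$ is precisely the right-hand side of \eqref{eqn_gen_rama_analogue}. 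For $k<0$ all three polynomial parts are empty, so the left-hand side collapses to $0$, matching the empty right-hand side.

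The only genuinely delicate point is the divisor-sum bookkeeping in the first step --- keeping track of which nested ranges are restricted to be coprime to $p$ and not double-counting the multiples of $p$ --- after which everything is routine. I would also note that Theorem \ref{thm_gen_rama_analogue} admits a direct proof by contour integration: one integrates the kernel
\begin{equation*}
\frac{1}{z^{2k+1}}\Bigl(\pi\cot(\pi z)-\tfrac1p\,\pi\cot\tfrac{\pi z}{p}\Bigr)\Bigl(\coth(\alpha z)-p\coth(p\alpha z)\Bigr)
\end{equation*}
over an expanding family of contours avoiding its poles and sets the sum of residues equal to zero: the residues at the integers contribute $L(2k+1,\chi_1)$ and the $a_n$-weighted series (because the first factor has residue $\chi_1(n)$ at the integer $n$, while the correction $-p\coth(p\alpha z)$ manufactures the weights $a_n$), the residues on the imaginary axis contribute the dual $\beta$-side terms, and the residue at $z=0$ contributes the Bernoulli-number polynomial. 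This route makes the appearance of the symmetric factor $(p^{2j}-1)(p^{2k+2-2j}-1)$, and hence of the new family of Ramanujan-type polynomials, the most transparent.
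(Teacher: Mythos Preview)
Your approach is correct and genuinely different from the paper's. The paper proves Theorem \ref{thm_gen_rama_analogue} by a direct Mellin-transform argument: it writes the $a_n$-weighted Lambert series as the contour integral of
\[
\Gamma(s)\,\zeta(s)\,\zeta(s+2k+1)\,(1-p^{1-s})(1-p^{-s-2k-1})(2x)^{-s}
\]
along $\Re(s)=c>\max\{1,-2k\}$, shifts the line of integration past the poles at $s=0,-1,-3,\dots,-(2k-1),-2k$ (only $0$ and $-2k$ when $k<0$), and then recovers the dual series on the new line via the functional equation (Lemma \ref{Use of functional equation}). The residues at the odd negative integers produce the Bernoulli sum on the right, and those at $0$ and $-2k$ produce the two $\tfrac{p-1}{2}L(2k+1,\chi_1)$ terms.

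Your route is instead a \emph{reduction to the classical Ramanujan identity}. The key decomposition
\[
\Phi(\gamma)=S(\gamma)-\bigl(p+p^{-2k-1}\bigr)S(p\gamma)+p^{-2k}S(p^{2}\gamma)
\]
is correct (and is exactly the Lambert-series incarnation of the Dirichlet-series factorisation in Lemma \ref{lem_zeta_l_sum}), and the three pairs $(\alpha,p^2\beta)$, $(p\alpha,p\beta)$, $(p^2\alpha,\beta)$ do have product $\pi^2$. I checked your zeta cancellation and the factorisation $1-p^{2j}-p^{2k+2-2j}+p^{2k+2}=(p^{2j}-1)(p^{2k+2-2j}-1)$; both are right, and the boundary terms $j=0,k+1$ do drop out. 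The trade-off is that your argument is more elementary (it recycles \eqref{RamaF} rather than redoing the contour work) and makes the emergence of the symmetric factor $(p^{2j}-1)(p^{2k+2-2j}-1)$ completely transparent, whereas the paper's argument is self-contained, treats positive and negative $k$ uniformly within the same residue count, and sets up the machinery that is reused verbatim for the $k=0$ identity (Theorem \ref{thm_k=0}), where no analogue of your reduction is available because $s=0$ becomes a double pole. The contour-integral sketch you add at the end is a kernel-on-rectangles variant of the paper's Mellin-transform computation; the two are morally the same, though the paper's version avoids having to estimate the kernel on the contour directly.
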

Letting $\al = \be = \frac{\pi}{p}$ and replacing $ k\to 2k+1 $ as an odd positive integer, Theorem \ref{thm_gen_rama_analogue}  immediately provides an interesting identity for $L(4k+3,\chi_1)$ analogous to Lerch's formula \eqref{lerch_f} for $\zeta(4k+3)$. 
\begin{corollary} \label{cor_gen_lerch_analogue}
Let $p$ be a prime number and $\chi_1$ be the principal Dirichlet character modulo $p$.  
For any integer $k \geq 0$,  we have
\begin{align}
    L(4k+3,\chi_1) & = \frac{2}{p-1}\summ{n=1}{\infty}{a_n \bracket{ \sum_{d|n}{\frac{\chi_1(d)}{d^{4k+3}}}e^{-\frac{ 2 n \pi}{p}} }} \nonumber \\
    &+ \frac{2^{4k}}{2(p-1)}  \left( \frac{\pi}{p} \right)^{4k+3}  \summ{j=1}{k}{(-1)^{j-1}(p^{2j}-1)(p^{4k+4-2j}-1)\frac{B_{2j}}{(2j)!} \frac{B_{4k+4-2j}}{(4k+4-2j)!} }. \label{eqn_gen_lerch_analogue}
\end{align}
\end{corollary}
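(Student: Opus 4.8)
The result is obtained by specialising Theorem~\ref{thm_gen_rama_analogue}, and the whole plan is to choose the free parameters suitably and then keep careful track of the resulting constants. First I take $\alpha=\beta$ in \eqref{eqn_gen_rama_analogue}; since $\alpha,\beta>0$ and $\alpha\beta=\pi^2/p^2$, this forces $\alpha=\beta=\pi/p$. Next I replace the nonzero integer $k$ occurring in Theorem~\ref{thm_gen_rama_analogue} by the odd positive integer $2k+1$ (permissible, as $2k+1\in\mathbb Z\setminus\{0\}$); under this substitution $L(2k+1,\chi_1)$ becomes $L(4k+3,\chi_1)$, the exponential $e^{-2n\alpha}$ becomes $e^{-2n\pi/p}$, and the divisor sum $\sum_{d\mid n}\chi_1(d)/d^{2k+1}$ becomes $\sum_{d\mid n}\chi_1(d)/d^{4k+3}$.

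I then simplify the left-hand side of \eqref{eqn_gen_rama_analogue}. With $\alpha=\beta$ the two parenthesised expressions there are literally equal; write $T$ for their common value, so that
\begin{equation*}
T:=\frac{p-1}{2}\,L(4k+3,\chi_1)-\sum_{n=1}^{\infty} a_n\left(\sum_{d\mid n}\frac{\chi_1(d)}{d^{4k+3}}\right)e^{-2n\pi/p}.
\end{equation*}
Because $2k+1$ is odd we have $(-4\beta)^{-(2k+1)}=-(4\alpha)^{-(2k+1)}$, so the two terms on the left \emph{add} rather than cancel, and the entire left-hand side collapses to $2\,(4\pi/p)^{-(2k+1)}\,T$.

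For the right-hand side, after the substitution $k\mapsto 2k+1$ the exponents of $\alpha$ and $\beta$ in the $j$-th term are $(2k+1)+1-j=2k+2-j$ and $j$, so the monomial $\alpha^{2k+2-j}\beta^{j}=(\pi/p)^{2k+2}$ is independent of $j$ and factors out of the finite sum, leaving $(\pi/p)^{2k+2}$ times a sum of products of Bernoulli numbers running over $j=1,\dots,2k+1$; here one may also record the invariance of that summand under $j\mapsto 2k+2-j$. Equating the two simplified sides, substituting the definition of $T$, dividing through by $\tfrac{p-1}{2}$, and transposing the exponential series to the right-hand side produces \eqref{eqn_gen_lerch_analogue}.

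There is no real obstacle in any of this: the argument is just a substitution of special values into Theorem~\ref{thm_gen_rama_analogue}. The one point that needs a little attention is the final constant, which emerges from the product $(4\pi/p)^{2k+1}(\pi/p)^{2k+2}=2^{4k+2}(\pi/p)^{4k+3}$ together with the extra factor $2$ picked up above (and, if one prefers the folded form of the Bernoulli sum, from the symmetry $j\mapsto 2k+2-j$).
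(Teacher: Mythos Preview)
Your approach is exactly the paper's: its proof of this corollary is the single sentence ``substitute $\alpha=\beta=\pi/p$ and replace $k$ by $2k+1$ in Theorem~\ref{thm_gen_rama_analogue},'' and you have carried this out with correct bookkeeping. The only wrinkle is in your closing paragraph: your computation honestly yields the coefficient $\tfrac{2^{4k+2}}{p-1}\,(\pi/p)^{4k+3}$ multiplying a Bernoulli sum over $j=1,\dots,2k+1$, and this is confirmed by the $p=2$ special case \eqref{eq_analogue_lerch}; the mismatch with the printed constant $\tfrac{2^{4k}}{2(p-1)}$ and upper index $k$ in \eqref{eqn_gen_lerch_analogue} is a typographical slip in the stated corollary rather than a defect in your argument, so do not try to absorb it via the $j\mapsto 2k+2-j$ fold (which in any case leaves an unpaired middle term at $j=k+1$).
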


\begin{corollary}\label{for k negative}
Let $\a,  \b>0 $ such that $\a \b = \pi^2/p^2$.  For any integer $k \geq 0$,  we have 
\begin{align*}
 \alpha^{k+1} \summ{n=1}{\infty} {a_n \bigg( \sum_{d|n} {\chi_1(d) d^{2k+1}} \bigg) }e^{-2n \al} &  - (-\beta)^{k+1} \summ{n=1}{\infty}{a_n \bigg( \sum_{d|n} {\chi_1(d) d^{2k+1}} \bigg) }e^{-2n \beta} \nonumber \\ 
&= (p-1) (p^{2k+1}-1) \left( \alpha^{k+1} -  (-\beta)^{k+1} \right)  \frac{B_{2k+2}}{4k+4}. 
\end{align*}
\end{corollary}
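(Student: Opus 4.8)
The plan is to obtain Corollary~\ref{for k negative} as a direct specialization of Theorem~\ref{thm_gen_rama_analogue} to negative values of the running index. Concretely, in identity~\eqref{eqn_gen_rama_analogue} I would replace $k$ by the negative integer $-(k+1)$; this is legitimate because $k \geq 0$ forces $-(k+1) \in \Z \setminus \{0\}$, and the constraint $\al\be = \pi^2/p^2$ is unaffected. Under this substitution the prefactors transform as $(4\al)^{-k} \mapsto (4\al)^{k+1}$ and $(-4\be)^{-k} \mapsto (-4\be)^{k+1}$, the argument $2k+1$ of the $L$-function becomes $-2k-1$, and the inner twisted divisor sum becomes $\sum_{d|n} \chi_1(d)/d^{\,2(-(k+1))+1} = \sum_{d|n}\chi_1(d)\, d^{2k+1}$, which is exactly the coefficient occurring in the corollary. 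Crucially, the finite Bernoulli sum $\sum_{j=1}^{-(k+1)}$ on the right of~\eqref{eqn_gen_rama_analogue} is now an empty sum and hence vanishes; this is precisely the negative-index reading of the theorem already flagged in the remark following Theorem~\ref{thm_analogue_ramanujan}, and is consistent with the fact that there $L(2k+1,\chi_1)$ is understood via analytic continuation.

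With these changes, writing $S(t) := \sum_{n=1}^{\infty} a_n\left(\sum_{d|n}\chi_1(d) d^{2k+1}\right) e^{-2nt}$ for $t>0$ (the series converges since its coefficients grow only polynomially in $n$), identity~\eqref{eqn_gen_rama_analogue} collapses to
\begin{align*}
(4\al)^{k+1}\left(\frac{p-1}{2}L(-2k-1,\chi_1) - S(\al)\right) - (-4\be)^{k+1}\left(\frac{p-1}{2}L(-2k-1,\chi_1) - S(\be)\right) = 0.
\end{align*}
I would then move the two exponential series to one side, pull out the common factor $4^{k+1}$, and use $(-4\be)^{k+1} = 4^{k+1}(-\be)^{k+1}$ to reduce this to
\begin{align*}
\al^{k+1}S(\al) - (-\be)^{k+1}S(\be) = \frac{p-1}{2}\, L(-2k-1,\chi_1)\left(\al^{k+1} - (-\be)^{k+1}\right).
\end{align*}

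It remains to evaluate $L(-2k-1,\chi_1)$ in closed form. Since $p$ is prime and $\chi_1$ is the principal character modulo $p$, analytic continuation of the Euler product gives $L(s,\chi_1) = \zeta(s)(1-p^{-s})$ for all $s$, hence $L(-2k-1,\chi_1) = \zeta(-2k-1)(1-p^{2k+1})$. Inserting the classical value $\zeta(-2k-1) = -B_{2k+2}/(2k+2)$ yields $L(-2k-1,\chi_1) = (p^{2k+1}-1)B_{2k+2}/(2k+2)$, so that $\frac{p-1}{2}L(-2k-1,\chi_1) = (p-1)(p^{2k+1}-1)\frac{B_{2k+2}}{4k+4}$. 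Substituting this into the last display and unwinding the definition of $S(t)$ gives precisely the assertion of Corollary~\ref{for k negative}. I do not anticipate a genuine obstacle here: the only delicate points are the bookkeeping of signs and powers of $4$ under the substitution $k\mapsto-(k+1)$, and the recognition that the Bernoulli sum truly degenerates to the empty sum, so that the entire right-hand side of the corollary is produced by the special value of $L(s,\chi_1)$ at $s=-2k-1$.
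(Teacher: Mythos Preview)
Your proof is correct and follows essentially the same route as the paper: replace $k$ by $-(k+1)$ in Theorem~\ref{thm_gen_rama_analogue}, observe that the Bernoulli sum is empty, and then evaluate $L(-2k-1,\chi_1)=(p^{2k+1}-1)\dfrac{B_{2k+2}}{2k+2}$ to obtain the stated right-hand side. Your write-up is in fact slightly more explicit than the paper's in tracking the factor $4^{k+1}$ and in justifying the special value via $L(s,\chi_1)=\zeta(s)(1-p^{-s})$ and $\zeta(-2k-1)=-B_{2k+2}/(2k+2)$.
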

One can observe that the above identity is a resemblant of the following identity of Ramanujan  \cite[Vol.~1, p.~259]{ramanote}.

{\emph {Let $k \geq 0$ be a positive integer.  For $\alpha,\beta>0$ with $\alpha \beta=\pi^2$,  we have
\begin{align}\label{Rama_k=1_ r=2m+1}
\alpha^{k+1} \sum_{n=1}^{\infty} \frac{n^{2k+1}}{e^{2n\alpha}-1}-(-\beta)^{k+1}  \sum_{n=1}^{\infty} \frac{n^{2k+1}}{e^{2n\beta}-1}=(\alpha^{k+1} -(-\beta)^{k+1}) \frac{B_{2k+2}}{4k+4}.
\end{align}}}

The point to be noted that the Theorem \ref{thm_gen_rama_analogue} is not true for $k=0$ since $L(s, \chi_1)$ has a simple pole at $s=1$.  Corresponding to $k=0$, we obtain the below identity.  
\begin{theorem} \label{thm_k=0}
Let us consider the variables as we defined in Theorem \ref{thm_gen_rama_analogue}.  Then
    \begin{equation}
        \summ{n=1}{\infty}{a_n \bigg( \sum_{d|n}{\frac{\chi_1(d)}{d}}} e^{-2n \al } \bigg) -\summ{n=1}{\infty}{a_n \bigg( \sum_{d|n}{\frac{\chi_1(d)}{d}}} e^{-2n \be } \bigg) = \frac{(p-1)^2}{2p}\log\bracket{\frac{\pi}{p \al}}. \label{identity_k_0}
    \end{equation}
\end{theorem}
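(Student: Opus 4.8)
The plan is to prove \eqref{identity_k_0} by the same contour-integration method used for Theorem~\ref{thm_gen_rama_analogue}, the single new feature being that the relevant pole at the origin becomes a double pole, and this is exactly what produces the logarithm. Set $f(\alpha) = \sum_{n=1}^{\infty} a_n \big( \sum_{d \mid n} \chi_1(d)/d \big) e^{-2n\alpha}$. From $\int_0^{\infty} e^{-2n\alpha}\alpha^{s-1}\,d\alpha = \Gamma(s)(2n)^{-s}$ and Mellin inversion, for $c>1$ one has $f(\alpha) = \frac{1}{2\pi i} \I{c} \Gamma(s)(2\alpha)^{-s} D(s)\,ds$, where $D(s) = \sum_{n=1}^{\infty} a_n \big( \sum_{d \mid n} \chi_1(d)/d \big) n^{-s}$. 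The first step is to put $D(s)$ in closed form. Writing $n = p^b m$ with $p \nmid m$, the inner divisor sum equals $\sigma_{-1}(m)$, because its $p$-local factor $\sum_{i=0}^{b}\chi_1(p^i)p^{-i}$ reduces to $1$, while $a_n$ depends only on whether $b=0$; summing the resulting geometric series in $b$ gives the local factor $(1-p^{1-s})/(1-p^{-s})$, which combined with the Euler product $\sum_{p \nmid m} \sigma_{-1}(m) m^{-s} = \zeta(s)\zeta(s+1)(1-p^{-s})(1-p^{-s-1})$ yields
\begin{align*}
D(s) = \zeta(s)\,\zeta(s+1)\,(1-p^{-s-1})(1-p^{1-s}) = \zeta(s)\,L(s+1,\chi_1)\,(1-p^{1-s}).
\end{align*}

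The second step is to move the contour from $\Re(s)=c$ to $\Re(s)=-c$ (take $c$ generic, say $c=3/2$), which is legitimate because $\Gamma(s)$ decays exponentially and $\zeta$ grows polynomially along vertical lines. By Cauchy's theorem, $f(\alpha) - \frac{1}{2\pi i}\I{-c}\Gamma(s)(2\alpha)^{-s}D(s)\,ds$ equals the sum of residues in the strip. The pole of $\zeta(s)$ at $s=1$ is cancelled by the simple zero of $1-p^{1-s}$ there; at a negative integer $s=-m$ the simple pole of $\Gamma(s)$ contributes nothing, since $\zeta(-m)\zeta(1-m)=0$ for $m\geq 2$ (trivial zeros) and the factor $1-p^{-s-1}$ vanishes at $s=-1$. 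Hence only $s=0$ matters, and there $\Gamma(s)$ and $\zeta(s+1)$ each have a simple pole, so the integrand has a double pole. Writing it as $s^{-2}Q(s)$ with $Q(s) = \Gamma(s+1)\,\big(s\zeta(s+1)\big)\,(2\alpha)^{-s}\,\zeta(s)\,(1-p^{-s-1})(1-p^{1-s})$ holomorphic at $0$, one has $Q(0) = \tfrac{(p-1)^2}{2p}$, and logarithmic differentiation using $\psi(1)=-\gamma$, $\zeta(s+1) = s^{-1}+\gamma+O(s)$, $\zeta(0)=-\tfrac12$ and $\zeta'(0) = -\tfrac12\log(2\pi)$ (the two occurrences of $\gamma$ cancel) gives
\begin{align*}
\Res_{s=0}\bigl[\Gamma(s)(2\alpha)^{-s}D(s)\bigr] = Q'(0) = \frac{(p-1)^2}{2p}\,\log\bracket{\frac{\pi}{p\alpha}}.
\end{align*}

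The third step is to identify the shifted integral with $f(\beta)$. Replacing $s$ by $-s$ turns it into $\frac{1}{2\pi i}\I{c}\Gamma(-s)(2\alpha)^{s}D(-s)\,ds$; applying the functional equations for $\zeta(-s)$ and $\zeta(1-s)$ together with the reflection formula $\Gamma(s)\Gamma(1-s)=\pi/\sin(\pi s)$ converts $\Gamma(-s)D(-s)$ into $(2\pi)^{-2s}\Gamma(s)\zeta(s)\zeta(s+1)(1-p^{s-1})(1-p^{s+1})$. The constraint $\alpha\beta = \pi^2/p^2$ lets one write $(2\alpha)^{s}(2\pi)^{-2s} = (2\beta)^{-s}p^{-2s}$, and the elementary identity $p^{-2s}(1-p^{s-1})(1-p^{s+1}) = (1-p^{-s-1})(1-p^{1-s})$ restores exactly $(2\beta)^{-s}\Gamma(s)D(s)$; hence the shifted integral equals $\frac{1}{2\pi i}\I{c}\Gamma(s)(2\beta)^{-s}D(s)\,ds = f(\beta)$. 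Putting the three steps together gives $f(\alpha)-f(\beta) = \frac{(p-1)^2}{2p}\log(\pi/(p\alpha))$, which is \eqref{identity_k_0}.

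I expect the main obstacle to be the double-pole residue of the second step: the constants coming from $\Gamma$, $\zeta$, $\zeta'(0)$ and the two $p$-factors must line up to produce precisely $\log(\pi/(p\alpha))$, and one must also check carefully that the trivial zeros of $\zeta$ together with the special factors $1-p^{\pm(s-1)}$ kill every other residue in the strip. The functional-equation bookkeeping of the third step is entirely analogous to the one in the proof of Theorem~\ref{thm_gen_rama_analogue}, so I do not anticipate difficulty there beyond keeping track of signs.
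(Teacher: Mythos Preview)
Your proposal is correct and follows essentially the same approach as the paper: express the sum as a Mellin integral with integrand $\Gamma(s)\zeta(s)\zeta(s+1)(1-p^{1-s})(1-p^{-s-1})(2\alpha)^{-s}$, observe that the only pole in the strip is the double pole at $s=0$, compute that residue, and identify the shifted integral with $f(\beta)$ via the functional equation. The only cosmetic differences are that you derive $D(s)$ directly rather than invoking Lemma~\ref{lem_zeta_l_sum}, and you compute the residue by logarithmic differentiation rather than by multiplying out Laurent expansions as the paper does; both lead to the same value $\tfrac{(p-1)^2}{2p}\log(\pi/(p\alpha))$.
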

In particular,  for $p=2$,  we derive an interesting identity.  
\begin{corollary} For $\alpha,  \beta >0$ with $\alpha \beta = \pi^2/4$,  we get
    \begin{equation}\label{analogous to Dedekind}
        \summ{n=0}{\infty}{\frac{(2n+1)^{-1}}{ e^{2(2n+1)\al}+1 } } - \summ{n=0}{\infty}{ \frac{(2n+1)^{-1}}{e^{2(2n+1)\be}+1 } } =\frac{\log(\pi)- \log(2 \al)}{4}.
    \end{equation}
\end{corollary}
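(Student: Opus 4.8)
The plan is to obtain this corollary as the special case $p=2$ of Theorem~\ref{thm_k=0}, after rewriting the Lambert-type series on the left of \eqref{identity_k_0} in the exponential form appearing in \eqref{analogous to Dedekind}.

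First I would substitute $p=2$ into \eqref{identity_k_0}. On the right-hand side, $\frac{(p-1)^2}{2p}=\frac14$ and $\frac{\pi}{p\alpha}=\frac{\pi}{2\alpha}$, so that side becomes $\frac14\log\!\big(\frac{\pi}{2\alpha}\big)=\frac{\log\pi-\log(2\alpha)}{4}$, which is exactly the right-hand side of \eqref{analogous to Dedekind}. It therefore remains only to identify the two series on the left. For the principal character modulo $2$ one has $\chi_1(d)=1$ when $d$ is odd and $\chi_1(d)=0$ otherwise, so $\sum_{d\mid n}\frac{\chi_1(d)}{d}=\sum_{d\mid n,\ d\ \mathrm{odd}}\frac1d$; and from \eqref{a_n}, $a_n=1$ for $n$ odd and $a_n=-1$ for $n$ even.

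Next I would expand the target series geometrically. Writing $\ell=2n+1$, we have $\sum_{n=0}^{\infty}\frac{(2n+1)^{-1}}{e^{2(2n+1)\alpha}+1}=\sum_{\ell\ \mathrm{odd}}\frac1\ell\cdot\frac{1}{e^{2\ell\alpha}+1}$, and since $\alpha>0$ we may insert $\frac{1}{e^{2\ell\alpha}+1}=\sum_{j\ge1}(-1)^{j-1}e^{-2j\ell\alpha}$ and interchange the (absolutely convergent) sums. Collecting the coefficient of $e^{-2n\alpha}$ on each side, the identification of the two series reduces to the arithmetic identity
\[
a_n\sum_{\substack{d\mid n\\ d\ \mathrm{odd}}}\frac1d \;=\; \sum_{\substack{\ell\mid n\\ \ell\ \mathrm{odd}}}\frac{(-1)^{n/\ell-1}}{\ell}.
\]
This is checked by writing $n=2^{a}m$ with $m$ odd: if $a=0$ then $a_n=1$ and every $n/\ell$ is odd, so both sides equal $\sigma_{-1}(n)$; if $a\ge1$ then $a_n=-1$, the odd divisors of $n$ are precisely the divisors of $m$, every $n/\ell$ is even so all the signs on the right are $-1$, and both sides equal $-\sigma_{-1}(m)$. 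Running the identical computation with $\alpha$ replaced by $\beta$ turns the left-hand side of \eqref{identity_k_0} into the left-hand side of \eqref{analogous to Dedekind}, which proves the corollary.

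The only points requiring any care are the interchange of summation in the geometric expansion (harmless for $\alpha>0$) and the bookkeeping in the displayed arithmetic identity; neither constitutes a real obstacle, so the corollary is essentially immediate once $p=2$ is put into Theorem~\ref{thm_k=0}.
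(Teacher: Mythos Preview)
Your proof is correct and follows exactly the route the paper intends: the paper states this corollary as the specialization $p=2$ of Theorem~\ref{thm_k=0} without further comment, and your argument supplies precisely the (straightforward) series manipulation needed to pass from the Lambert-type sum $\sum_{n\ge1}a_n\sigma_{-1,\chi_1}(n)e^{-2n\alpha}$ to the form $\sum_{n\ge0}\frac{(2n+1)^{-1}}{e^{2(2n+1)\alpha}+1}$. Your arithmetic identity and its verification by writing $n=2^a m$ are correct, and the interchange of sums is justified as you note.
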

\begin{remark}
The identity \eqref{analogous to Dedekind} is clearly analogous to the following well-known identity of Ramanujan \cite[Ch. 16, Entry 27(iii)]{ramanote} \cite[p.~320, Formula (29)]{lnb}.
For $\alpha ,\beta >0$ and $\alpha\beta=\pi^2 $,
\begin{equation}\label{Dedekind}
\sum_{n=1}^{\infty}\frac{1}{n(e^{2n\alpha}-1)}-\sum_{n=1}^{\infty}\frac{1}{n(e^{2n\beta}-1)}=\frac{\beta-\alpha}{12} + \frac{1}{4}\log\left(\frac{\alpha}{\beta}\right),
\end{equation}
which is equivalent to the transformation formula for the logarithm of the Dedekind eta function $\eta(z)$,  a half-integral weight modular form.  To know more about it, readers can see \cite[p.~256]{bcbramsecnote}, \cite[p.~43]{bcbramthinote}.  It would be interesting to find such an equivalent form for the identity \eqref{analogous to Dedekind}. 
\end{remark}



\subsection{Analogue of Grosswald's identity and Ramanujan-type polynomials}
In the introduction,  we have already seen that the Grosswald's identity \eqref{grosswald_eq} is a natural generalization of Ramanujan's formula \eqref{RamaF}.  The generalized divisor function $\sigma_{-k}(n) = \sum_{d|n} d^{-k}$ appeared in Grosswald's identity.  A natural character analogue of the generalized divisor function is defined by $\sigma_{-k, \chi}(n) = \sum_{d|n} \chi(d) d^{-k}$,  where $\chi$ is a Dirichlet character.  This character analogue of the divisor function has been studied by many mathematicians.  Now we state a generalization Theorem \ref{thm_gen_rama_analogue},  namely,  a Grosswald-type identity for complex variable $z$.

\begin{theorem} \label{thm_gen_grosswald_analogue}

Let $p$ be a prime number and $\chi_1$ be the principal Dirichlet character modulo $p$.  For any $k\in \mathbb{N}$ and $z \in \mathbb{H}$,  we have

\begin{align}
 \left( z p\right)^{2k} &  \mathfrak{F}_{2k+1,  \chi_1}\left(-\frac{1}{p^2 z} \right)  - \mathfrak{F}_{2k+1, \chi_1}(z) =   \frac{p-1}{2}L(2k+1,\chi_1)  \left\{ \left( z p\right)^{2k} -1  \right\}   \nonumber \\
   + & \frac{(2\pi i)^{2k+1}}{ 2z \, p^{2k+2}} \summ{j=1}{k}  {(p^{2j}-1)(p^{2k+2-2j}-1)\frac{B_{2j}}{(2j)!}\frac{B_{2k+2-2j}}{(2k+2-2j)!} } ( pz)^{2k+2-2j},\label{eqn_gen_grosswald_analogue}
\end{align}
 where 
\begin{align}\label{F-function}
 \mathfrak{F}_{k,\chi_1}(z):= \sum_{n=1}^\infty a_n \sigma_{-k,  \chi_1}(n) \exp( 2\pi i n z).
 \end{align}
\end{theorem}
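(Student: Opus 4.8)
The natural route is to prove this Grosswald-type identity by the same contour-integration mechanism that underlies Theorem \ref{thm_gen_rama_analogue}, but now keeping the complex variable $z \in \mathbb{H}$ free rather than specializing to $z = i\be/\pi$ with $\al\be = \pi^2/p^2$. First I would introduce the kernel
\[
\phi(w) = \frac{\pi^2}{p^2}\,\cot\!\left(\frac{\pi w}{p}\right)\cot\!\left(\frac{\pi w}{p z}\right)\frac{1}{w^{2k+1}},
\]
or rather the appropriate tangent-type analogue matching \eqref{Ram Tan Id}, and integrate it around a large square contour $C_N$ avoiding the poles. Since the paper's Theorem \ref{thm_gen_rama_analogue} is stated to come from \eqref{Ram Tan Id}, the honest kernel involves $\tan$ at odd multiples and the factor $a_n$ and $\chi_1$ will emerge from summing over residues indexed by integers coprime to $p$ versus divisible by $p$ — this is exactly how the $(p^{2j}-1)$ factors and the $a_n$ weights in \eqref{a_n} are produced. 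The integral $\oint_{C_N}\phi(w)\,dw \to 0$ as $N \to \infty$ by the standard boundedness of $\cot$ (resp.\ $\tanh$) on the contour together with the $w^{-2k-1}$ decay, so the sum of all residues vanishes.

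Next I would classify the poles. There is a pole at $w=0$ of order $2k+3$ coming from the $w^{-2k-1}$ together with the double pole of the product of cotangents; its residue is a Laurent-coefficient extraction that produces the finite Bernoulli sum $\sum_{j=1}^{k}(p^{2j}-1)(p^{2k+2-2j}-1)\frac{B_{2j}}{(2j)!}\frac{B_{2k+2-2j}}{(2k+2-2j)!}(pz)^{2k+2-2j}$, after using the Laurent expansions $\cot x = \sum_{j\ge 0}(-1)^j \frac{2^{2j}B_{2j}}{(2j)!}x^{2j-1}$ and collecting the coefficient of $w^{2k}$. The simple poles at the zeros of $\cot(\pi w/p)$, i.e.\ $w$ a half-integer multiple appropriately scaled, give on summation (after regrouping divisor-sums) the two series $\mathfrak{F}_{2k+1,\chi_1}(z)$ and $(zp)^{2k}\mathfrak{F}_{2k+1,\chi_1}(-1/(p^2 z))$ — the asymmetry $z \mapsto -1/(p^2 z)$ is precisely the inversion swapping the two cotangent arguments, and the power $(zp)^{2k}$ is the Jacobian-type factor from $w^{-2k-1}$ under that substitution. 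The simple poles at the zeros of $\cot(\pi w/(pz))$ give the mirror contribution, and the $L(2k+1,\chi_1)$ terms with the weight $\tfrac{p-1}{2}$ arise from the "constant term" of the relevant divisor sum, i.e.\ the $d\mid n$ contributions where the eta-type kernel degenerates; the coefficient $\tfrac{p-1}{2}$ is $\tfrac12\sum_{d\bmod p}\chi_1(d)$ rescaled, matching the $\tfrac{p-1}{2}L(2k+1,\chi_1)$ already present in Theorem \ref{thm_gen_rama_analogue}.

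Assembling: setting the total residue sum to zero and solving for the combination $(zp)^{2k}\mathfrak{F}_{2k+1,\chi_1}(-1/(p^2z)) - \mathfrak{F}_{2k+1,\chi_1}(z)$ yields \eqref{eqn_gen_grosswald_analogue}, with the prefactor $\frac{(2\pi i)^{2k+1}}{2z\,p^{2k+2}}$ bookkeeping the normalization $(2\pi i)^{2k+1}$ from writing the exponentials $\exp(2\pi i n z)$ and the $p^{-2k-2}$ from the two factors of $1/p$ in the kernel's arguments. As a sanity check, substituting $z = i\be/\pi$ with $\al\be = \pi^2/p^2$, converting $\cot(i y) = -i\coth y$ and hence the exponential generating series into the $(e^{-2n\al}+1)^{-1}$–type sums, and clearing the factor $z^{2k}-1$ must reduce \eqref{eqn_gen_grosswald_analogue} exactly to \eqref{eqn_gen_rama_analogue}; this pins down every sign and constant.

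**Main obstacle.** The delicate point is the order-$(2k+3)$ pole at $w=0$: extracting the coefficient of $w^{2k}$ in the product of two shifted cotangent Laurent series and showing it collapses to the clean bilinear Bernoulli sum with the $(p^{2j}-1)(p^{2k+2-2j}-1)$ factors — rather than a messier double sum — requires care with the index range (why the sum runs $j=1$ to $k$ and not $0$ to $k+1$, reflecting that the $j=0$ and $j=k+1$ "boundary" terms either cancel against the $L$-value terms or vanish because $\chi_1$ kills the $p\mid d$ part). Controlling that cancellation, and simultaneously justifying the interchange of summation and the $N\to\infty$ limit uniformly in the (fixed but arbitrary) $z \in \mathbb{H}$ — in particular checking the contour can be chosen to stay a bounded distance from all poles of $\cot(\pi w/(pz))$ for the given $z$ — is where the real work lies; the rest is bookkeeping already rehearsed in the proof of Theorem \ref{thm_gen_rama_analogue}.
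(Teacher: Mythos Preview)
Your approach is genuinely different from the paper's.  The paper does not redo any contour integration for Theorem~\ref{thm_gen_grosswald_analogue}; instead it observes that Theorem~\ref{thm_gen_rama_analogue} (already established via the Mellin/vertical-line integral \eqref{series_integral}--\eqref{Final_left_vertical}) extends by analytic continuation to $\Re(\alpha),\Re(\beta)>0$, and then simply substitutes $\alpha=-i\pi z$, $\beta=i\pi/(p^2 z)$ and multiplies through by $(4\alpha)^k$.  After recognising $e^{-2n\alpha}=e^{2\pi i n z}$ and $e^{-2n\beta}=e^{-2\pi i n/(p^2 z)}$, the two infinite sums become $\mathfrak{F}_{2k+1,\chi_1}(z)$ and $\mathfrak{F}_{2k+1,\chi_1}(-1/(p^2z))$, and the Bernoulli sum rearranges to the stated polynomial; the whole argument is a few lines of algebra.

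Your square-contour residue scheme is in principle a legitimate alternative and is closer in spirit to the classical direct proofs of Grosswald's original identity, but as written it leaves the kernel unspecified for general $p$.  The tangent identity \eqref{Ram Tan Id} only handles $p=2$, and a plain $\cot(\pi w/p)\cot(\pi w/(pz))\,w^{-2k-1}$ kernel does not by itself produce the weights $a_n$ of \eqref{a_n} or the twist by $\chi_1$: you would still have to identify exactly which linear combination of cotangents (or which difference such as $\cot(\pi w)-p\cot(p\pi w)$ in each factor) realises the Dirichlet series $\zeta(s)(1-p^{1-s})\,L(s+2k+1,\chi_1)$ on the residue side, and then carry out the high-order pole computation you flag.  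The paper's route buys all of this for free, since the $a_n$ and $\chi_1$ structure is already encoded in Theorem~\ref{thm_gen_rama_analogue}; what your route would buy, if the kernel were made explicit, is a self-contained proof that does not pass through the Mellin representation at all.
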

Here we can clearly observe that the polynomial present on the right hand side of the above theorem is an analogue of the Ramanujan's polynomial \eqref{Ramanujan polynomial}.  For any prime number $p$ and $k \in \mathbb{N}$,   we define this Ramanujan-type polynomial as
\begin{align}\label{new Ramanujan-type polynomila}
\mathfrak{R}_{2k+1,p}(z):=  \summ{j=1}{k}  {(p^{2j}-1)(p^{2k+2-2j}-1)\frac{B_{2j}}{(2j)!}\frac{B_{2k+2-2j}}{(2k+2-2j)!} } (p z)^{2k+2-2j}. 
\end{align}
Interestingly,  the above polynomial for $p=2$ has been studied by Lal\'in and Rogers \cite[Theorem 2]{ML}.  Utilizing their result,  we can immediately say that all the complex zeros of the  polynomial $\mathfrak{R}_{2k+1,2}(z)$ lie on the circle $|z|=1/2$ and roots are simple.  Next,  we state a more general conjecture, which suggests that a more general version of Lal\'in and Rogers's result might be true.
\begin{conjecture}\label{conjecture1}
Let $p$ be a prime number and $k$ be a natural number.  The Ramanujan-type polynomial $\mathfrak{R}_{2k+1,p}(z)$ has only real zero $z=0$ of multiplicity $2$,  and rest are the non real zeros on the circle $|z| =1/p$ and are simple.  
\end{conjecture}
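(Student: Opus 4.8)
The plan is to first reduce Conjecture~\ref{conjecture1} to a statement about the zero count of an explicit real trigonometric polynomial, and then to establish that count by generalising, prime by prime, the argument of Lal\'in and Rogers \cite[Theorem~2]{ML} for $p=2$. Write $c_j:=(p^{2j}-1)(p^{2k+2-2j}-1)\frac{B_{2j}}{(2j)!}\frac{B_{2k+2-2j}}{(2k+2-2j)!}$, so that $\mathfrak{R}_{2k+1,p}(z)=\sum_{j=1}^{k}c_j(pz)^{2k+2-2j}$. Since $B_{2j}$ has sign $(-1)^{j+1}$, all the $c_j$ carry the single sign $(-1)^{k+1}$, and the involution $j\mapsto k+1-j$ shows $c_j=c_{k+1-j}$; in particular the extreme coefficient $c_1=c_k=(p^2-1)(p^{2k}-1)\frac{B_2}{2!}\frac{B_{2k}}{(2k)!}$ is nonzero, so $z=0$ is a zero of exact multiplicity $2$, and the palindromic symmetry of the $c_j$ yields the functional equation $\mathfrak{R}_{2k+1,p}(z)=(pz)^{2k+2}\mathfrak{R}_{2k+1,p}\!\left(-\frac{1}{p^2 z}\right)$ (this also drops out of Theorem~\ref{thm_gen_grosswald_analogue} applied at $z$ and at $-1/(p^2z)$). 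This symmetry fixes the circle $|z|=1/p$ setwise, so the natural move is to parametrise it by $z=e^{i\theta}/p$: one checks that $e^{-i(k+1)\theta}\mathfrak{R}_{2k+1,p}(e^{i\theta}/p)=\Phi_k(\theta)$, where
\begin{equation*}
\Phi_k(\theta):=\sum_{j=1}^{k}c_j\cos\!\big((k+1-2j)\theta\big)
\end{equation*}
is a real cosine polynomial of top frequency $k-1$, hence having at most $2(k-1)$ zeros in $[0,2\pi)$. Thus Conjecture~\ref{conjecture1} is equivalent to the assertion that $\Phi_k$ attains this maximum and that the zeros are simple.

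The main tool for counting the zeros of $\Phi_k$ is the generating identity
\begin{equation*}
\sum_{k\ge 1}\mathfrak{R}_{2k+1,p}(z)\,t^{2k+2}=g_p(t)\,g_p(pzt),\qquad g_p(u):=\frac{pu}{2}\coth\frac{pu}{2}-\frac{u}{2}\coth\frac{u}{2},
\end{equation*}
which follows at once from $\sum_{j\ge0}\frac{B_{2j}}{(2j)!}u^{2j}=\frac{u}{2}\coth\frac{u}{2}$. It gives the contour representation $\Phi_k(\theta)=\frac{e^{-i(k+1)\theta}}{2\pi i}\oint \frac{g_p(t)\,g_p(e^{i\theta}t)}{t^{2k+3}}\,dt$ over a small circle. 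Now $g_p$ extends to a meromorphic function whose only poles are simple, located exactly at $t=2\pi i m/p$ with $p\nmid m$ (the poles of $\frac{pu}{2}\coth\frac{pu}{2}$ at the points $2\pi i m/p$, those with $p\mid m$ being cancelled by $\frac{u}{2}\coth\frac{u}{2}$). I would therefore enlarge the contour and collect the residues coming from the poles of the two factors $g_p(t)$ and $g_p(e^{i\theta}t)$ to obtain an exact ``long form'' of $\Phi_k$ as a rapidly convergent series, and then exhibit $2(k-1)$ sign changes of $\Phi_k$ on an explicit equally spaced grid $\theta_\ell$ by isolating the dominant residual contributions; this is precisely the scheme Lal\'in and Rogers execute for $p=2$, so the task is to carry it through uniformly for each prime $p$.

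The principal obstacle is exactly this uniformity in $p$. One has $|c_j|\asymp p^{2k+2}(2\pi)^{-2k-2}\,\zeta(2j)\,\zeta(2k+2-2j)$, so all the coefficients are of comparable magnitude---the extreme ones $c_1=c_k$ beat the central ones only by a bounded factor---and $\Phi_k$ is therefore \emph{not} dominated by a handful of terms; a crude Rouch\'e or perturbation argument around a two-term model will not close the gap. One needs estimates sharp enough to keep the omitted contributions strictly below the spacing between consecutive sign changes, and the arithmetic of the Mersenne-type factors $2^{2j}-1$ that streamlines the $p=2$ estimates is not available in general. Should a direct treatment prove too delicate, there are two fallbacks: first, $\mathfrak{F}_{2k+1,\chi_1}$ is, up to normalisation, the Eichler integral of a weight-$(2k+2)$ Eisenstein series on $\Gamma_0(p)$ and $\mathfrak{R}_{2k+1,p}$ is its period polynomial, so the machinery developed for unimodular zeros of period polynomials of level-$N$ Eisenstein series should apply; second, the conjecture can be settled one prime at a time, $p=2$ being already covered by \cite[Theorem~2]{ML}. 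In either route the simplicity of the zeros follows automatically from the strictness of the sign changes.
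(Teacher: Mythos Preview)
The paper does \emph{not} prove this statement: it is explicitly presented as Conjecture~\ref{conjecture1}, supported only by the numerical data in Table~\ref{Numerical_Zeros} and by the remark that the special case $p=2$ follows from Lal\'in and Rogers \cite[Theorem~2]{ML}. There is therefore no proof in the paper against which to compare your attempt.

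That said, your reduction is correct and is a sensible first step toward the conjecture. The observations that the $c_j$ all share the sign $(-1)^{k+1}$, that $c_j=c_{k+1-j}$ (so $z=0$ has exact multiplicity~$2$ and $\mathfrak{R}_{2k+1,p}(z)=(pz)^{2k+2}\mathfrak{R}_{2k+1,p}(-1/(p^2z))$), and that on $|z|=1/p$ the polynomial reduces to the real cosine polynomial $\Phi_k(\theta)=\sum_{j=1}^{k}c_j\cos((k+1-2j)\theta)$ of top frequency $k-1$ are all valid, as is the generating identity $\sum_{k\ge1}\mathfrak{R}_{2k+1,p}(z)t^{2k+2}=g_p(t)g_p(pzt)$. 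These facts reduce the conjecture to showing that $\Phi_k$ has exactly $2(k-1)$ simple zeros on $[0,2\pi)$.

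However, your submission is a plan, not a proof, and you yourself identify the gap. After the reduction you propose to ``enlarge the contour and collect the residues'' and then ``exhibit $2(k-1)$ sign changes of $\Phi_k$ on an explicit equally spaced grid,'' but you do not carry this out; you note that the coefficients $c_j$ are all of comparable size, so no simple dominance argument works, and that the arithmetic simplifications available at $p=2$ do not persist. The fallbacks you list (period-polynomial machinery for $\Gamma_0(p)$, or treating one prime at a time) are reasonable directions, but neither is executed. In short: the preparatory lemmas are fine, but the heart of the matter---the uniform-in-$p$ sign-change count for $\Phi_k$---remains open, which is exactly why the paper records this as a conjecture rather than a theorem.
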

We state another conjecture,  which gives information about the zeros of the Ramanujan-type polynomial $\mathfrak{R}_{2k+1,p}(z)$ that are $2k$th roots of unity. 
\begin{conjecture}\label{conjecture2}
If $2|k$, then the polynomials $\mathfrak{R}_{2k+1,p}(z)$ and $(pz)^{2k}-1$ have only common roots $z=\pm i/p$, and if $2\nmid k$,  they do have any common roots.
\end{conjecture}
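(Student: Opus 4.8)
The plan is to reduce Conjecture~\ref{conjecture2} to the well-understood arithmetic of the classical Ramanujan polynomial $R_{2k+1}$. The starting point is the exact polynomial identity
\begin{equation*}
\mathfrak{R}_{2k+1,p}(z)=(p^{2k+2}+1)\,R_{2k+1}(pz)-p^{2k+2}\,R_{2k+1}(z)-R_{2k+1}(p^{2}z),
\end{equation*}
obtained by expanding $(p^{2j}-1)(p^{2k+2-2j}-1)=p^{2k+2}-p^{2j}-p^{2k+2-2j}+1$ in the definition of $\mathfrak{R}_{2k+1,p}(z)$ and using $p^{2j}(pz)^{2k+2-2j}=p^{2k+2}z^{2k+2-2j}$, $p^{2k+2-2j}(pz)^{2k+2-2j}=(p^{2}z)^{2k+2-2j}$, together with $\sum_{j=0}^{k+1}\frac{B_{2j}}{(2j)!}\frac{B_{2k+2-2j}}{(2k+2-2j)!}w^{2k+2-2j}=R_{2k+1}(w)$ (the extra terms $j=0,k+1$ do no harm, their coefficients vanishing). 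Hence $z$ is a common zero of $\mathfrak{R}_{2k+1,p}(z)$ and $(pz)^{2k}-1$ exactly when $\zeta:=pz$ is a $2k$-th root of unity with $(p^{2k+2}+1)R_{2k+1}(\zeta)=p^{2k+2}R_{2k+1}(\zeta/p)+R_{2k+1}(p\zeta)$, and, invoking the functional equation $R_{2k+1}(w)=w^{2k+2}R_{2k+1}(1/w)$ (so that $R_{2k+1}(\zeta/p)=\zeta^{2}p^{-2k-2}\overline{R_{2k+1}(p\zeta)}$ and $R_{2k+1}(\zeta)=\zeta^{2}\overline{R_{2k+1}(\zeta)}$, whence $\overline{\zeta}R_{2k+1}(\zeta)\in\mathbb{R}$), this condition is equivalent to the identity of real numbers
\begin{equation*}
(p^{2k+2}+1)\,\overline{\zeta}\,R_{2k+1}(\zeta)=2\,\Re\big(\overline{\zeta}\,R_{2k+1}(p\zeta)\big).
\end{equation*}
Thus Conjecture~\ref{conjecture2} amounts to the claim that, for a $2k$-th root of unity $\zeta$, this equality holds if and only if $\zeta=\pm i$ (which forces $2\mid k$, since $(\pm i)^{2k}=(-1)^{k}$).

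The ``if'' direction is elementary. When $2\mid k$ and $\zeta=\pm i$ (so $z=\pm i/p$), $\mathfrak{R}_{2k+1,p}(\pm i/p)=\sum_{j=1}^{k}b_{j}(-1)^{k+1-j}$ with $b_{j}=(p^{2j}-1)(p^{2k+2-2j}-1)\frac{B_{2j}}{(2j)!}\frac{B_{2k+2-2j}}{(2k+2-2j)!}$; since $b_{k+1-j}=b_{j}$, pairing the $j$-th and $(k+1-j)$-th terms yields $b_{j}\big[(-1)^{k+1-j}+(-1)^{j}\big]=b_{j}(-1)^{j}\big[(-1)^{k+1}+1\big]=0$, and there is no unpaired middle term because $k+1$ is odd; hence $z=\pm i/p$ are common zeros. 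Under the identity above this records the Murty--Smith--Wang fact \cite{ZofR} that $R_{2k+1}(\pm i)=0$ for even $k$, together with the reality of $R_{2k+1}(\pm ip)$.

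The substance of the conjecture is the ``only if'' direction: one must show that for every $2k$-th root of unity $\zeta\notin\{\pm i\}$ the displayed real equality fails. I would proceed as follows. If $R_{2k+1}(p\zeta)=0$, then $p\zeta$ is a non-unimodular --- hence, by \cite{ZofR}, real --- zero of $R_{2k+1}$, forcing $\zeta\in\{\pm1\}$; but then the equality would demand $R_{2k+1}(\pm1)=0$, contradicting $R_{2k+1}(1)=-\frac{(2k+1)B_{2k+2}}{(2k+2)!}\ne0$. So we may assume $R_{2k+1}(p\zeta)\ne0$. By \cite{ZofR} (for $k\ge 4$, the few smaller $k$ being checked directly), apart from four real simple zeros all zeros of $R_{2k+1}$ lie simply on $|w|=1$, and the real number $\overline{\zeta}R_{2k+1}(\zeta)$ vanishes at a $2k$-th root of unity $\zeta$ only when $\zeta$ is a unimodular zero of $R_{2k+1}$, namely $\zeta=\pm i$ (if $2\mid k$) or $\zeta\in\{\pm\rho,\pm\rho^{2}\}$ (if $3\mid k$). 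For $\zeta\in\{\pm\rho,\pm\rho^{2}\}$ the left side vanishes, so one needs $\Re(\overline{\zeta}R_{2k+1}(p\zeta))\ne0$; for every other $\zeta$ (in particular $\zeta=\pm1$) both sides are nonzero reals whose distinctness should follow from quantitative control of $R_{2k+1}$ along $2k$-th roots of unity --- combining $|R_{2k+1}(w)|\le\sum_{j}\big|\frac{B_{2j}}{(2j)!}\frac{B_{2k+2-2j}}{(2k+2-2j)!}\big|$ on $|w|=1$ with the sign pattern and growth of $R_{2k+1}$ off the circle --- the case $\zeta=\pm1$ reducing to $(p^{2k+2}+1)R_{2k+1}(1)\ne 2R_{2k+1}(p)$. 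Equivalently, one can route this through Theorem~\ref{thm_gen_grosswald_analogue}: for $\zeta$ with $\Im\zeta>0$ set $z_{0}=\zeta/p$; then $-1/(p^{2}z_{0})=-\overline{z_{0}}$ and $\mathfrak{F}_{2k+1,\chi_{1}}(-\overline{z_{0}})=\overline{\mathfrak{F}_{2k+1,\chi_{1}}(z_{0})}$ (its coefficients $a_{n}\sigma_{-2k-1,\chi_{1}}(n)$ being real), so Theorem~\ref{thm_gen_grosswald_analogue} collapses at $z_{0}$ to the equivalence $\mathfrak{R}_{2k+1,p}(z_{0})=0\iff\mathfrak{F}_{2k+1,\chi_{1}}(\zeta/p)\in\mathbb{R}$; the remaining $\zeta$ are handled by the symmetries $z\mapsto-z$, $z\mapsto\overline{z}$ of $\mathfrak{R}_{2k+1,p}$, and one is reduced to proving $\Im\,\mathfrak{F}_{2k+1,\chi_{1}}(\zeta/p)\ne0$ for $\zeta\ne\pm i$. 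For $p=2$ one may additionally invoke Lal\'{i}n--Rogers's Theorem~2 \cite{ML}, which already locates all zeros of $\mathfrak{R}_{2k+1,2}(z)$ on $|z|=1/2$ and proves their simplicity, reducing Conjecture~\ref{conjecture2} for $p=2$ to deciding which of those finitely many zeros are roots of unity.

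The hard part is precisely this last non-vanishing --- the quantitative separation of the two real quantities above, equivalently the non-reality of $\mathfrak{F}_{2k+1,\chi_{1}}(\zeta/p)$ for $\zeta$ a $2k$-th root of unity other than $\pm i$ --- which is a transcendence-flavoured assertion for which no soft argument uniform in $k$ is in sight; this is why the statement is posed only as a conjecture for general primes $p$, the natural models to imitate being the roots-of-unity analyses of the classical Ramanujan polynomial by Murty--Smith--Wang \cite{ZofR} and Gun--Murty--Rath \cite{GunMP}, and, for $p=2$, that of Lal\'{i}n and Rogers \cite{ML}.
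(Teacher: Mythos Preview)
The paper does not prove this statement; it is explicitly stated as a conjecture and supported only by the numerical evidence in Table~\ref{Numerical_Zeros}. There is therefore no proof in the paper to compare your proposal against.

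Your analysis in fact goes further than the paper. The polynomial identity
\[
\mathfrak{R}_{2k+1,p}(z)=(p^{2k+2}+1)\,R_{2k+1}(pz)-p^{2k+2}\,R_{2k+1}(z)-R_{2k+1}(p^{2}z)
\]
is correct (the $j=0$ and $j=k+1$ boundary terms do indeed cancel, as you say) and does not appear in the paper. Your proof of the ``if'' direction --- that $z=\pm i/p$ are common zeros when $2\mid k$ --- via the pairing $j\leftrightarrow k+1-j$ is valid. Your subsequent reduction of the common-zero condition to the real equality $(p^{2k+2}+1)\,\overline{\zeta}\,R_{2k+1}(\zeta)=2\,\Re\big(\overline{\zeta}\,R_{2k+1}(p\zeta)\big)$ for a $2k$-th root of unity $\zeta$ is also correct, as is the alternative reformulation via Theorem~\ref{thm_gen_grosswald_analogue} as $\Im\,\mathfrak{F}_{2k+1,\chi_1}(\zeta/p)\ne 0$.

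That said, your proposal is not a proof of the conjecture, and you are candid about this: the ``only if'' direction --- showing the displayed equality fails for every $2k$-th root of unity $\zeta\notin\{\pm i\}$ --- is left as an outline. The steps you sketch (ruling out $R_{2k+1}(p\zeta)=0$, handling $\zeta\in\{\pm\rho,\pm\rho^2\}$, and the generic separation argument) are plausible directions but none is actually carried out, and the quantitative control you invoke on $R_{2k+1}$ along roots of unity is precisely the missing ingredient. Your closing paragraph acknowledges exactly this. So what you have is a correct and useful partial analysis --- one that genuinely advances beyond the paper's numerical evidence --- but the conjecture remains open after your proposal, just as it does in the paper.
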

In the final section,  we provide numerical evidences for these two conjectures. 
Now assuming these conjectures,  inspired from the work of Gun,  Murty and Rath \cite{GunMP},  we obtain the below mentioned results.  
First,  let us define a quantity,  extracted from Theorem \ref{thm_gen_grosswald_analogue}, for any $z\in \mathbb{H}$ with $(pz)^{2k} \neq 1$,  as
\begin{align}\label{quantity_G}
\mathfrak{G}_{2k+1, \chi_1}(z):= \frac{   \left( z p\right)^{2k}   \mathfrak{F}_{2k+1,  \chi_1}\left(-\frac{1}{p^2 z} \right)  - \mathfrak{F}_{2k+1, \chi_1}(z) }{ (pz)^{2k}-1}.
\end{align}

\begin{proposition}\label{thm_gmr_one_alg}
The set 
$$ \{  \mathfrak{G}_{2k+1,\chi_1}(z) | \hspace{.1cm} z \in \mathbb{H} \cap \overline{ \mathbb{Q}}, \hspace{.2cm} (pz)^{2k} \neq 1 \} $$
contains at most one algebraic number. 
\end{proposition}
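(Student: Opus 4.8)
The plan is to mimic the Gun--Murty--Rath argument from \cite{GunMP} adapted to our setting. First I would suppose, for contradiction, that there exist two distinct points $z_1, z_2 \in \mathbb{H} \cap \overline{\mathbb{Q}}$ with $(pz_j)^{2k} \neq 1$ for $j=1,2$ such that both $\mathfrak{G}_{2k+1,\chi_1}(z_1)$ and $\mathfrak{G}_{2k+1,\chi_1}(z_2)$ are algebraic. The key transcendence input is a theorem of Bertrand (and, in the character setting, the Schneider--Lang / Baker-type statements used in \cite{GunMP}) to the effect that for an algebraic point $\tau \in \mathbb{H}$ the value $\mathfrak{F}_{2k+1,\chi_1}(\tau)$ is transcendental; more precisely, since $\mathfrak{F}_{2k+1,\chi_1}$ is (up to the elementary factor coming from $a_n$ and twisting by $\chi_1$) an Eichler-type integral built out of $F_{2k+1}$ evaluated at arguments in $p^{-1}\mathbb{Z}$-translates, the relevant statement is that $\mathfrak{F}_{2k+1,\chi_1}(\tau) \in \overline{\mathbb{Q}}$ only for finitely many $\tau$, and never for two ``independent'' $\tau$'s simultaneously. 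I would make this precise by writing $\mathfrak{F}_{2k+1,\chi_1}$ as an explicit finite $\mathbb{Q}$-linear combination of functions of the form $F_{2k+1}(a z + b)$ with $a, b \in \mathbb{Q}$, so that the known results on $F_{2k+1}$ transfer directly.

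Granting that, the main step is algebraic bookkeeping. From \eqref{quantity_G}, having $\mathfrak{G}_{2k+1,\chi_1}(z_j) = \mu_j \in \overline{\mathbb{Q}}$ means
\begin{align*}
(p z_j)^{2k}\,\mathfrak{F}_{2k+1,\chi_1}\!\left(-\tfrac{1}{p^2 z_j}\right) - \mathfrak{F}_{2k+1,\chi_1}(z_j) = \mu_j\big((p z_j)^{2k}-1\big),
\end{align*}
an algebraic identity relating the two transcendental numbers $\mathfrak{F}_{2k+1,\chi_1}(z_j)$ and $\mathfrak{F}_{2k+1,\chi_1}(-1/(p^2 z_j))$. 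Combining this with Theorem \ref{thm_gen_grosswald_analogue} itself — which already gives a second relation between the same pair, with the right-hand side being $\tfrac{p-1}{2}L(2k+1,\chi_1)\{(pz_j)^{2k}-1\}$ plus the polynomial term $\tfrac{(2\pi i)^{2k+1}}{2 z_j p^{2k+2}}\mathfrak{R}_{2k+1,p}(z_j)$ — I can solve the linear system for $\mathfrak{F}_{2k+1,\chi_1}(z_j)$ and $\mathfrak{F}_{2k+1,\chi_1}(-1/(p^2 z_j))$ in terms of $\mu_j$, $z_j$, $L(2k+1,\chi_1)$, and $\pi$. The upshot is that $\mathfrak{F}_{2k+1,\chi_1}(z_j)$ is a $\overline{\mathbb{Q}}$-linear combination of $1$, $L(2k+1,\chi_1)$, and $\pi^{2k+1}$ (using that $\mathfrak{R}_{2k+1,p}(z_j) \in \overline{\mathbb{Q}}$ and $(2\pi i)^{2k+1}/z_j$ contributes the $\pi^{2k+1}$). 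Then applying this for both $j=1$ and $j=2$ and eliminating $L(2k+1,\chi_1)$ between the two relations produces a single $\overline{\mathbb{Q}}$-linear relation of the shape
\begin{align*}
\mathfrak{F}_{2k+1,\chi_1}(z_1) = A\,\mathfrak{F}_{2k+1,\chi_1}(z_2) + B\,\pi^{2k+1} + C,
\end{align*}
with $A, B, C \in \overline{\mathbb{Q}}$ (and $A \neq 0$ unless $z_1, z_2$ are forced to coincide, which one checks using $z_1 \neq z_2$ and the location of zeros of $(pz)^{2k}-1$).

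The contradiction then comes from the transcendence theory: such a linear dependence between $\mathfrak{F}_{2k+1,\chi_1}$ evaluated at two algebraic points, $\pi^{2k+1}$, and $1$ over $\overline{\mathbb{Q}}$ is ruled out by the same circle of results (Bertrand's theorem on periods of Eichler integrals, or the Gun--Murty--Rath transcendence theorem quoted in the introduction) once one translates $\mathfrak{F}_{2k+1,\chi_1}$ back into $F_{2k+1}$ of rational-linear arguments: the functions $F_{2k+1}(a_i z + b_i)$ at an algebraic point, together with $\pi^{2k+1}$, are ``sufficiently independent'' that no nontrivial $\overline{\mathbb{Q}}$-relation can hold unless the coefficient of each $\mathfrak{F}$-term vanishes, contradicting $A \neq 0$. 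I expect the main obstacle to be precisely this last point: making rigorous that the transcendence results available for $F_{2k+1}$ (which are about a single algebraic argument $z$ and $-1/z$) are strong enough to preclude a relation involving $\mathfrak{F}_{2k+1,\chi_1}$ at two genuinely different points $z_1, z_2$ — this requires either invoking a multi-point version of Bertrand's theorem or carefully arranging the algebra so that only the $z$-versus-$(-1/(p^2z))$ pairing is ever used. A secondary technical point is verifying $A \neq 0$ and handling the finitely many degenerate $z_j$ (those with $(pz_j)^{2k}=1$ or lying among the exceptional roots of $\mathfrak{R}_{2k+1,p}$), which should be a routine case check.
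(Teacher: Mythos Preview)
Your proposal contains a genuine structural gap. You write that the definition of $\mathfrak{G}$ gives one relation between $\mathfrak{F}_{2k+1,\chi_1}(z_j)$ and $\mathfrak{F}_{2k+1,\chi_1}(-1/(p^2 z_j))$, and that Theorem~\ref{thm_gen_grosswald_analogue} gives a ``second relation between the same pair,'' from which you propose to solve the linear system for the two $\mathfrak{F}$-values individually. But both relations have \emph{exactly the same} left-hand side, namely $(pz_j)^{2k}\mathfrak{F}_{2k+1,\chi_1}(-1/(p^2z_j)) - \mathfrak{F}_{2k+1,\chi_1}(z_j)$; the system has rank one, not two, and you cannot isolate $\mathfrak{F}_{2k+1,\chi_1}(z_j)$. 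Consequently the derivation leading to your displayed relation $\mathfrak{F}_{2k+1,\chi_1}(z_1) = A\,\mathfrak{F}_{2k+1,\chi_1}(z_2) + B\,\pi^{2k+1} + C$ cannot be carried out as described, and the Bertrand/Schneider--Lang input you plan to invoke (which you yourself flag as the main obstacle) is never reached.

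What you are missing is that equating the two right-hand sides already finishes the problem with nothing deeper than the transcendence of $\pi$. Setting the right side of your $\mu_j$-equation equal to the right side of Theorem~\ref{thm_gen_grosswald_analogue} and dividing by $(pz_j)^{2k}-1$ gives
\[
\mathfrak{G}_{2k+1,\chi_1}(z_j)=\frac{p-1}{2}L(2k+1,\chi_1)+\frac{(2\pi i)^{2k+1}}{2z_j\,p^{2k+2}}\cdot\frac{\mathfrak{R}_{2k+1,p}(z_j)}{(pz_j)^{2k}-1},
\]
so that $\mathfrak{G}_{2k+1,\chi_1}(z_1)-\mathfrak{G}_{2k+1,\chi_1}(z_2)$ is an algebraic multiple of $\pi^{2k+1}$. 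If both $\mathfrak{G}$-values are algebraic, that multiple must vanish, forcing $\mathfrak{G}_{2k+1,\chi_1}(z_1)=\mathfrak{G}_{2k+1,\chi_1}(z_2)$; hence the set contains at most one algebraic number. This is precisely the paper's argument: no transcendence of $\mathfrak{F}$-values, no Bertrand, no multi-point theorems are needed.
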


\begin{theorem}\label{thm_gmr_exception}
Let $k \geq 0$ be an integer and $p$ be a prime.  Define $\delta_k=0,1$ respectively depending on if $\gcd(k,2)$ equals to $1$, or $2$. Then, with at most $2k+ \delta_k$ exceptions,  the quantity 
\begin{align}
\left( z p\right)^{2k}   \mathfrak{F}_{2k+1,  \chi_1}\left(-\frac{1}{p^2 z} \right)  - \mathfrak{F}_{2k+1, \chi_1}(z)
\end{align}
is transcendental for every algebraic number $z \in \mathbb{H}$.  This suggests that,  there are at most $2k+ \delta_k$ algebraic numbers $z$ on the upper half plane such that $\mathfrak{F}_{2k+1, \chi_1}(z)$ and $ \mathfrak{F}_{2k+1,  \chi_1}\left(-\frac{1}{p^2 z} \right)$ are both algebraic.  
\end{theorem}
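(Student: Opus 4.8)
The plan is to extract from Theorem~\ref{thm_gen_grosswald_analogue} an exact formula for the quantity in question and then decide when it can be algebraic. Write $Q(z)$ for $(zp)^{2k}\mathfrak{F}_{2k+1,\chi_1}(-1/(p^2z))-\mathfrak{F}_{2k+1,\chi_1}(z)$. By \eqref{eqn_gen_grosswald_analogue},
\begin{equation*}
Q(z)=\frac{p-1}{2}\,L(2k+1,\chi_1)\bigl\{(pz)^{2k}-1\bigr\}+\frac{(2\pi i)^{2k+1}}{2z\,p^{2k+2}}\,\mathfrak{R}_{2k+1,p}(z),
\end{equation*}
so the whole question is for which algebraic $z\in\mathbb{H}$ this fixed combination of $\pi^{2k+1}$ and $L(2k+1,\chi_1)$ with algebraic coefficients can be algebraic. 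I would run through the algebraic points of $\mathbb{H}$ in three disjoint classes, according to whether $(pz)^{2k}=1$ and whether $\mathfrak{R}_{2k+1,p}(z)=0$, and in each class either display a transcendence mechanism for $Q(z)$ or bound the number of exceptions.

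On the locus $(pz)^{2k}=1$ the formula collapses to $Q(z)=\frac{(2\pi i)^{2k+1}}{2zp^{2k+2}}\mathfrak{R}_{2k+1,p}(z)$, which is $\pi^{2k+1}$ times a nonzero algebraic number---hence transcendental by Lindemann---unless $z$ is also a zero of $\mathfrak{R}_{2k+1,p}$. By Conjecture~\ref{conjecture2} the common zeros of $(pz)^{2k}-1$ and $\mathfrak{R}_{2k+1,p}$ are $\pm i/p$ when $2\mid k$ and none when $2\nmid k$; only $i/p$ lies in $\mathbb{H}$ (where indeed $Q(i/p)=0$), so this class contributes at most $\delta_k$ exceptions. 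On the locus $\mathfrak{R}_{2k+1,p}(z)=0$ with $(pz)^{2k}\neq1$ the formula collapses to $Q(z)=\frac{p-1}{2}L(2k+1,\chi_1)\{(pz)^{2k}-1\}$; since $\mathfrak{R}_{2k+1,p}$ is an even polynomial of degree $2k$ with a double zero at $0$, it has $2(k-1)$ further zeros, at most $k-1$ of them in $\mathbb{H}$ (all on $|z|=1/p$ by Conjecture~\ref{conjecture1}), so this class contributes at most $k-1$ exceptions---and these occur exactly when $L(2k+1,\chi_1)=\zeta(2k+1)(1-p^{-2k-1})$ is algebraic, in which case $\mathfrak{G}_{2k+1,\chi_1}(z)$ takes the value $\frac{p-1}{2}L(2k+1,\chi_1)$ there.

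For the generic class, $z\in\mathbb{H}\cap\overline{\mathbb{Q}}$ with $(pz)^{2k}\neq1$ and $\mathfrak{R}_{2k+1,p}(z)\neq0$: if $Q(z)$ is algebraic then $\mathfrak{G}_{2k+1,\chi_1}(z)=Q(z)/((pz)^{2k}-1)$ (the quantity of \eqref{quantity_G}) is algebraic, so by Proposition~\ref{thm_gmr_one_alg} it equals the unique algebraic value $c$ in the range of $\mathfrak{G}_{2k+1,\chi_1}$---if that range has no algebraic value, this class is empty. Clearing denominators in $\mathfrak{G}_{2k+1,\chi_1}(z)=c$ shows $z$ satisfies
\begin{equation*}
(2\pi i)^{2k+1}\,\mathfrak{R}_{2k+1,p}(z)=A\,z\bigl((pz)^{2k}-1\bigr),\qquad A:=2p^{2k+2}\Bigl(c-\tfrac{p-1}{2}L(2k+1,\chi_1)\Bigr),
\end{equation*}
and $A\neq0$, for $A=0$ would force $\mathfrak{R}_{2k+1,p}(z)=0$. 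Hence this is a genuine polynomial equation in $z$ of degree $2k+1$ (top term $-Ap^{2k}z^{2k+1}$, as $\deg\mathfrak{R}_{2k+1,p}=2k$) with a simple root at $z=0$; it therefore has at most $2k$ roots in $\mathbb{H}$, so this class contributes at most $2k$ exceptions. Conversely, a generic $z$ outside this finite root set has $\mathfrak{G}_{2k+1,\chi_1}(z)\neq c$, hence $\mathfrak{G}_{2k+1,\chi_1}(z)$ transcendental (as $c$ is the only algebraic value it can take, by Proposition~\ref{thm_gmr_one_alg}), hence $Q(z)$ transcendental.

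Assembling the three bounds, the only delicate point is that the last two classes cannot both contribute: an exception in the middle class forces $c=\frac{p-1}{2}L(2k+1,\chi_1)$, i.e. $A=0$, which empties the generic class, leaving a total of at most $\delta_k+(k-1)$; and if the middle class is empty the total is at most $\delta_k+2k$; either way, at most $2k+\delta_k$ exceptions, with $Q(z)$ transcendental for all other algebraic $z\in\mathbb{H}$. The case $k=0$, excluded from Theorem~\ref{thm_gen_grosswald_analogue} by the pole of $L(s,\chi_1)$ at $s=1$, I would treat separately using Theorem~\ref{thm_k=0} and the Hermite--Lindemann theorem, the single exception being the point where the resulting logarithm of an algebraic number vanishes. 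The final assertion is then immediate: if $\mathfrak{F}_{2k+1,\chi_1}(z)$ and $\mathfrak{F}_{2k+1,\chi_1}(-1/(p^2z))$ are both algebraic for some algebraic $z\in\mathbb{H}$, then $Q(z)$ is algebraic, so $z$ is exceptional. The substantive inputs are Proposition~\ref{thm_gmr_one_alg} and Conjectures~\ref{conjecture1}--\ref{conjecture2}; granting these, the only step needing real care is the $A=0$ dichotomy above, which is exactly what keeps the bound at $2k+\delta_k$ despite our not knowing whether $\zeta(2k+1)$ is algebraic.
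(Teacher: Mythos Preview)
Your proof is correct and follows essentially the same route as the paper: split the algebraic points of $\mathbb{H}$ according to whether $(pz)^{2k}=1$, invoke Proposition~\ref{thm_gmr_one_alg} on the complement to reduce to a polynomial constraint of bounded degree, and use Conjecture~\ref{conjecture2} on the locus $(pz)^{2k}=1$ to pick up the $\delta_k$. The paper's version is terser: it handles everything with $(pz)^{2k}\neq 1$ in one stroke by observing that the equation $\mathfrak{G}_{2k+1,\chi_1}(z)=A$ is a polynomial condition of degree at most $2k$ in $z$, without separating out the zeros of $\mathfrak{R}_{2k+1,p}$. Your three-class decomposition and the $A=0$ dichotomy make explicit what the paper leaves implicit---namely, that the degree-$2k$ bound survives whether or not $L(2k+1,\chi_1)$ is algebraic---and this is a genuine clarification, since the paper's ``degree $2k$'' claim tacitly assumes the leading coefficient does not vanish. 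Your parenthetical appeal to Conjecture~\ref{conjecture1} is not actually needed for the count; only Conjecture~\ref{conjecture2} is used, as in the paper. Finally, you treat the case $k=0$ via Theorem~\ref{thm_k=0} and Hermite--Lindemann, which the paper's proof does not address even though the statement allows $k=0$.
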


\section{Preliminaries}

The Riemann zeta function $\zeta(s)$ obeys  the following asymmetric form of functional equation,  namely,  
\begin{equation}\label{asymmetric_functional_zeta}
    \zeta(s)= 2^{s} \pi^{s-1} \zeta(1-s) \Gamma(1-s) \sin \left( \frac{\pi s}{2}\right). 
\end{equation}
The Laurent series expansion of $\Gamma(s)$ around the point $s=0$ is the following
\begin{align}
    \Gamma(s)= \frac{1}{s} -\gamma + \frac{1}{2} \left( \gamma^{2} + \frac{\pi^{2}}{6}\right) s- \frac{1}{6} \left( \gamma^{3} + \gamma \frac{\pi^{2}}{2} + 2 \zeta(3) \right) s^{2} + \cdots,
\end{align}
where $\gamma$ is the well-known Euler-Mascheroni constant.  The gamma function $\Gamma(s)$ satisfies the below reflection formula,  namely, 
\begin{align}\label{reflection}
    \Gamma(s) \Gamma(1-s) = \frac{\pi}{ \sin (\pi s)}, \hspace{.5cm} \forall s \in \C \backslash \Z.
\end{align}
The next result provides an important information about the asymptotic growth of $\Gamma(s)$.  For $s=\sigma+iT$, in a vertical strip $\alpha \leq \sigma \leq \beta$,
\begin{equation}\label{Stirling's formula}
    | \Gamma ( \sigma+iT) | = \sqrt{2 \pi} |T|^{\sigma- 1/2} e^{-\frac{1}{2} |T|} \left(1+ \mathcal{O}\left( \frac{1}{|T|}\right)\right), \quad \textrm{as} \,\,  |T| \to \infty.
\end{equation}
Now we will state a few results which will be essential in proving our main results.

\begin{lemma}\label{Use of functional equation}
For any $k \in \mathbb{Z}$,  one has
    \begin{equation}
        \Gamma(s)\zeta(s)\zeta(s+2k+1) = (-1)^{k}(2 \pi)^{2s+2k}\Gamma(-s-2k)\zeta(1-s)\zeta(-s-2k). \label{lem_feq}
    \end{equation}
    \end{lemma}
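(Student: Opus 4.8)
The plan is to obtain \eqref{lem_feq} by applying the asymmetric functional equation \eqref{asymmetric_functional_zeta} to each of the two zeta factors on the left, then simplifying the resulting trigonometric and gamma factors with elementary identities. First I would write, from \eqref{asymmetric_functional_zeta},
$$
\zeta(s) = 2^{s}\pi^{s-1}\Gamma(1-s)\sin\!\left(\tfrac{\pi s}{2}\right)\zeta(1-s),
$$
and, applying the same equation with $s$ replaced by $s+2k+1$,
$$
\zeta(s+2k+1) = 2^{s+2k+1}\pi^{s+2k}\Gamma(-s-2k)\sin\!\left(\tfrac{\pi(s+2k+1)}{2}\right)\zeta(-s-2k).
$$
Multiplying these together produces the factor $2^{2s+2k+1}\pi^{2s+2k-1}\Gamma(1-s)\Gamma(-s-2k)\,\zeta(1-s)\zeta(-s-2k)$ times the product of the two sine terms.

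The next step is to simplify $\sin(\tfrac{\pi s}{2})\sin(\tfrac{\pi(s+2k+1)}{2})$. Since $\sin(\tfrac{\pi(s+2k+1)}{2}) = \sin\!\big(\tfrac{\pi s}{2}+\pi k+\tfrac{\pi}{2}\big) = (-1)^{k}\cos(\tfrac{\pi s}{2})$, the product equals $(-1)^{k}\sin(\tfrac{\pi s}{2})\cos(\tfrac{\pi s}{2}) = \tfrac{(-1)^{k}}{2}\sin(\pi s)$. This accounts for the sign $(-1)^{k}$ and reduces $2^{2s+2k+1}$ to $2^{2s+2k}$, leaving
$$
\zeta(s)\zeta(s+2k+1) = (-1)^{k}2^{2s+2k}\pi^{2s+2k-1}\Gamma(1-s)\Gamma(-s-2k)\sin(\pi s)\,\zeta(1-s)\zeta(-s-2k).
$$

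Finally, multiplying both sides by $\Gamma(s)$ and invoking the reflection formula \eqref{reflection} in the form $\Gamma(s)\Gamma(1-s)\sin(\pi s)=\pi$ absorbs $\Gamma(1-s)$ and $\sin(\pi s)$ while contributing one extra factor of $\pi$, upgrading $\pi^{2s+2k-1}$ to $\pi^{2s+2k}$; combining $2^{2s+2k}\pi^{2s+2k}=(2\pi)^{2s+2k}$ yields exactly \eqref{lem_feq}. I do not anticipate a genuine obstacle here; the only point requiring a word of care is that these are identities of meromorphic functions, so one should first carry out the manipulation in a region where every factor is finite and nonzero (for instance with $\Re(s)$ sufficiently negative, so that $\zeta(1-s)$ and $\zeta(-s-2k)$ are given by absolutely convergent Dirichlet series) and then invoke analytic continuation to extend the identity to all $s\in\mathbb{C}$.
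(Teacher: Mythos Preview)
Your proposal is correct and follows exactly the approach indicated in the paper: the paper's proof consists of the single sentence ``Employing the functional equation \eqref{asymmetric_functional_zeta} of $\zeta(s)$ and the reflection formula \eqref{reflection} for $\Gamma(s)$, one can obtain this identity,'' and you have simply supplied the routine details of that computation.
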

\begin{proof}
Employing the functional equation \eqref{asymmetric_functional_zeta} of $\zeta(s)$ and the reflection formula \eqref{reflection} for $\Gamma(s)$,  one can obtain this identity.  
\end{proof}
\begin{lemma} \label{lem_zeta_l_sum}
Let $p$ be a prime number and $a_n$ be the function as we defined in \eqref{a_n}.
Let $\chi_1$ be the principal Dirichlet character $mod$ $p$.  Then for $ \Re(s) > \max\{ 1,  -2k \}$, 
    \begin{equation}
        \summ{n=1}{\infty}{a_n \bracket{\sum_{d|n}\frac{\chi_1(d)}{d^{2k+1}}}}\frac{1}{n^s} = L(s+2k+1,\chi_1)\zeta(s)(1-p^{1-s}).
    \end{equation}
\end{lemma}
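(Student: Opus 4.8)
The plan is to recognise the Dirichlet series on the left as an Euler product and to read off its local factors. Write
\[
f(n):=\sum_{d\mid n}\frac{\chi_1(d)}{d^{2k+1}}=\sigma_{-2k-1,\chi_1}(n),
\]
so that the left-hand side is $D(s):=\summ{n=1}{\infty}a_nf(n)n^{-s}$. Since $d\mapsto\chi_1(d)d^{-2k-1}$ is completely multiplicative, $f$ is its Dirichlet convolution with the constant function $\mathbf 1$ and hence multiplicative; and $a_n$ is multiplicative too, as $a_1=1$ and $a_{mn}=a_ma_n$ whenever $\gcd(m,n)=1$ (one of $m,n$ is then coprime to $p$, so it contributes the factor $1$). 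Consequently $n\mapsto a_nf(n)$ is multiplicative. First I would note that $|f(n)|\ll_\varepsilon n^{\max\{0,\,-2k-1\}+\varepsilon}$, so that $D(s)$ converges absolutely precisely for $\Re(s)>\max\{1,-2k\}$; on this half-plane the rearrangement into the Euler product $D(s)=\prod_q D_q(s)$, with $D_q(s)=\summ{j=0}{\infty}a_{q^j}f(q^j)q^{-js}$, is legitimate.

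Next I would evaluate the local factors. For a prime $q\ne p$ one has $\chi_1(q)=1$ and $a_{q^j}=1$, hence $f(q^j)=\sum_{i=0}^{j}q^{-i(2k+1)}$ and, after interchanging the order of summation,
\[
D_q(s)=\summ{j=0}{\infty}\left(\sum_{i=0}^{j}q^{-i(2k+1)}\right)q^{-js}=\frac{1}{(1-q^{-s})\left(1-q^{-s-2k-1}\right)},
\]
which is exactly the $q$-Euler factor of $\zeta(s)L(s+2k+1,\chi_1)$. The crucial point occurs at $q=p$: since $\chi_1(p)=0$, every higher term in $f(p^j)$ vanishes and $f(p^j)=1$ for all $j\ge 0$, while $a_{p^0}=1$ and $a_{p^j}=1-p$ for $j\ge 1$; therefore
\[
D_p(s)=1+(1-p)\summ{j=1}{\infty}p^{-js}=1+(1-p)\frac{p^{-s}}{1-p^{-s}}=\frac{1-p^{1-s}}{1-p^{-s}}.
\]

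Finally I would assemble the product. Because $L(s+2k+1,\chi_1)=\zeta(s+2k+1)\left(1-p^{-s-2k-1}\right)$ carries no Euler factor at $p$, the $p$-factor of $\zeta(s)L(s+2k+1,\chi_1)$ is simply $1/(1-p^{-s})$; writing $D_p(s)=(1-p^{1-s})\cdot\frac{1}{1-p^{-s}}$ and combining with the equality of $D_q(s)$ and the $q$-factor of $\zeta(s)L(s+2k+1,\chi_1)$ for all $q\ne p$ yields
\[
D(s)=\left(1-p^{1-s}\right)\zeta(s)\,L(s+2k+1,\chi_1),
\]
as claimed. I do not expect a genuine obstacle here: the only step requiring a little care is pinning down the region of absolute convergence $\Re(s)>\max\{1,-2k\}$ (which is what justifies factoring the series), together with keeping track of the single prime $p$ at which the character and the coefficients $a_n$ conspire. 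An essentially equivalent route, should one prefer to avoid Euler products, is to split $n=p^jm$ with $p\nmid m$, use $f(p^jm)=f(m)$ to obtain $D(s)=\big(\summ{j=0}{\infty}a_{p^j}p^{-js}\big)\big(\sum_{p\nmid m}f(m)m^{-s}\big)$, and then evaluate $\sum_{p\nmid m}f(m)m^{-s}=(1-p^{-s})\zeta(s)L(s+2k+1,\chi_1)$ by the same cancellation.
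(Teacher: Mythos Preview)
Your Euler-product argument is correct. The paper proceeds by a slightly different (and shorter) route: it first identifies $\zeta(s)(1-p^{1-s})=\sum_{n\ge 1}a_n n^{-s}$, then writes the right-hand side as the Dirichlet convolution
\[
L(s+2k+1,\chi_1)\,\zeta(s)(1-p^{1-s})=\sum_{n\ge 1}\Bigl(\sum_{d\mid n}\frac{a_{n/d}\,\chi_1(d)}{d^{2k+1}}\Bigr)n^{-s},
\]
and observes that whenever $\chi_1(d)\ne 0$ (i.e.\ $\gcd(d,p)=1$) one has $a_{n/d}=a_n$, so $a_n$ factors out of the inner sum. Your Euler-product computation unpacks exactly this cancellation prime by prime; the alternative route you sketch at the end (splitting $n=p^jm$ with $p\nmid m$ and using $f(p^jm)=f(m)$) is essentially the paper's argument in disguise. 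The Euler-product version has the advantage of making the special role of the prime $p$ explicit and of treating the region of absolute convergence with more care than the paper does; the convolution version is terser and avoids computing any local factors.
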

\begin{proof}
First,  we note that $\zeta(s) (1- p^{1-s})= \sum_{n=1}^{\infty} a_n  n^{-s}$ for $\Re(s)>1$.  
Using Dirichlet convolution,  for $ \Re(s) > \max\{ 1,  -2k \}$,  one can verify that $$ L(s+2k+1,\chi_1)\zeta(s)(1-p^{1-s}) = \sum_{n=1}^\infty \left( \frac{ a_{\frac{n}{d}}  \chi_1(d) }{d^{2k+1} }  \right) \frac{1}{n^s}.$$
Note that as $\chi_1(d)$ survives only when $\gcd(d, p)=1$,  and in that case, we can check that $a_{\frac{n}{d}}= a_n$.  This completes the proof of this lemma.  
\end{proof}

\section{Proof of main results}

\begin{proof}[Theorem {\rm \ref{thm_analogue_ramanujan} }][]
Let $\alpha$ and  $\beta$ be positive number satisfying $\alpha \beta = \frac{\pi^2}{4}$.  
Substituting $\frac{\pi x}{2} = \sqrt{w \a}$,  $\frac{\pi y}{2}= \sqrt{w \beta}$ in the partial fraction decomposition \eqref{Ram Tan Id},  we see that
\begin{align}
    \frac{\pi}{4} \tan \left(\sqrt{ w \alpha} \right) \tanh \left(\sqrt{w \beta} \right)& = w\sum_{n=0}^{\infty} \frac{\tanh \{(2n+1) \alpha \}}{(2n+1) \{ \alpha(2n+1)^{2} + w\} } \nonumber\\
    &  + w\sum_{n=0}^{\infty} \frac{\tanh \{(2n+1) \beta  \}}{(2n+1) \{ \beta (2n+1)^{2} - w \} }. \label{ptan}
\end{align}
Now we can write
\begin{align}
w \sum_{n=0}^{\infty} \frac{ \tanh \{(2n+1) \alpha \} }{(2n+1) \{ \alpha(2n+1)^{2} + w\} }  & = w \sum_{n=0}^{\infty} \frac{  \tanh \{(2n+1) \alpha \} }{\alpha (2n+1)^3 } \left( 1 + \frac{w}{\alpha(2n+1)^2}  \right)^{-1}  \nonumber \\
& = \sum_{k=1}^{\infty} \sum_{n=0}^\infty \frac{ \tanh \{(2n+1) \alpha \}  }{ (2n+1)^{2k+1}} \frac{(-1)^{k+1} w^{k}}{\alpha^{k}}. \label{series expansion1}
\end{align}
Similarly,  one can show that
\begin{align}
w\sum_{n=0}^{\infty} \frac{\tanh \{(2n+1) \beta  \}}{(2n+1) \{ \beta (2n+1)^{2} - w \} } = \sum_{k=1}^{\infty} \sum_{n=0}^\infty \frac{ \tanh \{(2n+1) \beta \}  }{ (2n+1)^{2k+1}} \frac{ w^{k}}{\beta^{k}}. \label{series expansion2}
\end{align}
In view of \eqref{series expansion1} and \eqref{series expansion2},  the coefficient of $w^{k}$ of the right hand side expression of \eqref{ptan} is 
\begin{align}\label{coefficient of w^k_RHS}
 \sum_{n=0}^\infty \frac{ \tanh \{(2n+1) \alpha \}  }{ (2n+1)^{2k+1}} \frac{(-1)^{k+1} }{\alpha^{k}} + \sum_{n=0}^\infty \frac{ \tanh \{(2n+1) \beta \}  }{ (2n+1)^{2k+1}} \frac{1 }{\beta^{k}}.
\end{align}
Now we mention the Laurent series expansions \cite[p.~5]{Cohen} for $\tan z$ and $\tanh z$ around $z=0$,  i.e.,  for $0 < |z| < \frac{\pi}{2}$,  
\begin{align*}
\tan z & = \sum_{j=1}^{\infty} (-1)^{j-1} 2^{2j} (2^{2j}-1) \frac{B_{2j} }{(2j)!} z^{2j-1},  \quad
\tanh z  = \sum_{n=1}^\infty 2^{2n} (2^{2n}-1) \frac{B_{2n}}{(2n)!} z^{2n-1}.   
\end{align*}
On substituting the above series expansions,  the left hand side of \eqref{ptan} becomes
\begin{align*}
    \frac{\pi}{4} \left(\sum_{j=1}^{\infty}   (-1)^{j-1} 2^{2j}  (2^{2j}-1)\frac{B_{2j}}{(2j)!} (w\alpha)^{j-\frac{1}{2}} \right)
  \left( \sum_{n=1}^{\infty}  2^{2n}  (2^{2n}-1)\frac{B_{2n}}{(2n)!}  (w \beta)^{n- \frac{1}{2}}\right).
\end{align*}
Replacing $n+j-1$ by $k$,  the above product can be re-written as
\begin{align}\label{LHS_coefficient w^k}
  2^{2k+1}  \sum_{k=1}^\infty   \sum_{j=1}^{k+1} (-1)^{j-1}  (2^{2j}-1)  (2^{2k+2-2j}-1) \frac{B_{2j}}{(2j)!}  \frac{B_{2k+2-2j }}{(2k+2-2j)!} \alpha^j \beta^{k+1-j} w^{k}. 
\end{align}
Now collecting the coefficient of $w^k$ from \eqref{LHS_coefficient w^k} and together with \eqref{coefficient of w^k_RHS},  we arrive at
\begin{align}\label{alternative form_Ramanujan}
 &\sum_{n=0}^\infty  \frac{ \tanh \{(2n+1) \alpha \}  }{ (2n+1)^{2k+1}} \frac{(-1)^{k+1} }{\alpha^{k}} + \sum_{n=0}^\infty \frac{ \tanh \{(2n+1) \beta \}  }{ (2n+1)^{2k+1}} \frac{1 }{\beta^{k}} \nonumber \\ &  =  2^{2k+1}   \sum_{j=1}^{k+1} (-1)^{j-1}  (2^{2j}-1)  (2^{2k+2-2j}-1) \frac{B_{2j}}{(2j)!}  \frac{B_{2k+2-2j }}{(2k+2-2j)!} \alpha^j \beta^{k+1-j}.   
\end{align}
This identity is nothing but the equation \eqref{Ramana}.  Here we point out that the term corresponding to $j=k+1$ vanishes due to the presence of the factor $(2^{2k+2-2j}-1)$.  Now plugging $\tanh z = 1- \frac{2}{e^{2z} +1}$ and multiply by $(-1)^{k+1}$ in \eqref{alternative form_Ramanujan},  we deduce that
\begin{align}\label{odd zeta formula}
& \frac{1}{\alpha^k }\sum_{n=0}^\infty \left(  \frac{1}{(2n+1)^{2k+1} } - \frac{2 (2n+1)^{-2k-1}}{e^{ 2(2n+1)\alpha} + 1} \right) + \frac{(-1)^{k+1}}{\beta^k }\sum_{n=0}^\infty \left(  \frac{1}{(2n+1)^{2k+1} } - \frac{2 (2n+1)^{-2k-1}}{e^{ 2(2n+1)\beta} + 1} \right)  \nonumber \\ 
&= 2^{2k+1}   \sum_{j=1}^{k} (-1)^{k+j}  (2^{2j}-1)  (2^{2k+2-2j}-1) \frac{B_{2j}}{(2j)!}  \frac{B_{2k+2-2j }}{(2k+2-2j)!} \alpha^j \beta^{k+1-j}.  
\end{align}
Finally,  observe that
\begin{align}\label{Zeta(2k+1)}
\zeta(2k+1)\left( 1- 2^{-2k-1} \right) = \sum_{n=0}^\infty \frac{1}{(2n+1)^{2k+1}}.
\end{align}
Now making use of \eqref{Zeta(2k+1)} and replacing $j$ by $k+1-j$ in \eqref{odd zeta formula},  one can obtain \eqref{eq_analogue_ramanujan}.
\end{proof}

\begin{proof}[Corollary {\rm \ref{tan particular case}}][]
Letting $\alpha=\beta= \pi/2$, replacing $k$ by $2k+1$ in \eqref{Ramana} yields \eqref{Tan particular case}.  To obtain \eqref{eq_analogue_lerch},  we have to use the identity $\tanh z = 1- \frac{2}{e^{2z} +1}$ in \eqref{Tan particular case}.  
\end{proof}

\begin{proof}[Theorem {\rm \ref{thm_gen_rama_analogue} }][]
First,  we point out that the infinite series 
$$
\summ{n=1}{\infty}{a_n \left( \sum_{d|n} \frac{\chi_1(d)}{d^{2k+1}} \right) }e^{-2n x}  
$$
is convergent for any positive real number $x$.  Making use of the inverse Mellin transform for $\Gamma(s)$ and employing Lemma \ref{lem_zeta_l_sum},  for $\Re(s)=c > \max\{1, -2k\}$,  we can readily show that
\begin{align}\label{series_integral}
\summ{n=1}{\infty}{a_n \left( \sum_{d|n} \frac{\chi_1(d)}{d^{2k+1}} \right) }e^{-2n x}  =  \frac{1}{2\pi i} \I{c} \Gamma(s)\zeta(s)(1-p^{1-s})L(s+2k+1,\chi_1)(2x)^{-s} \mathrm{d}s,
\end{align}
where $\chi_1$ is the principal Dirichlet character modulo $p$.  Note that $L(s+2k+1,\chi_1) = \rz{s+2k+1}(1-p^{-s-2k-1})$.  Thus,  we must try to evaluate the following line integration
\begin{align}\label{right_vertical_integral}
I_{p,  k}(x):= \frac{1}{2\pi i} \I{c} \Gamma(s)\zeta(s)  \zeta(s+2k+1) (1-p^{1-s}) (1-p^{-s-2k-1}) (2x)^{-s} \mathrm{d}s. 
\end{align}
Let us define the integrand function $$f(s):= \Gamma(s)\zeta(s)  \zeta(s+2k+1) (1-p^{1-s}) (1-p^{-s-2k-1}) (2x)^{-s}.  $$
Now we shall try to investigate the poles of the integrand function.  We know that $\Gamma(s)$ has simple poles at negative integers including zero and $\zeta(s)$ has a simple pole at $s=1$.  Again,  $\zeta(s)$ has trivial zeros at negative even integers.  Note that $(1-p^{1-s}) (1-p^{-s-2k-1})$ has trivial zeros at $s=1$ and $s=-(2k+1)$.  \\
{\bf Case I}: For $k>0$,  one can check that the only simple poles of the integrand function $f(s)$  are at $0,  -1,  -3,  -5,  \cdots,  -2k+1$,  and $-2k$. \\
{\bf Case II}: When $k$ is a negative integer,  $\zeta(s+2k+1)$ has trivial zeros at all negative odd integers.  Therefore,  in that case,  the integrand function $f(s)$ has poles only at $0$ and $-2k$.  

Now we construct a suitable rectangular contour $\mathfrak{C}$ consisting of the vertices $c-i T,  c+i T, d+i T,  d-iT$ with counter-clockwise direction.  Here $T$ is some large positive quantity with $c > \max\{ 1, -2k\}$  and $d$ is cleverly chosen to be less than $\min\{ 0,  -2k-1 \}$ so that all the poles of the integrand function $f(s)$ lie inside the contour $\mathfrak{C}$.  
Now appealing to Cacuchy's residue theorem,  we see that 
\begin{align}\label{CRT}
   \frac{1}{2\pi i}\bracket{ \int_{c-iT}^{c+iT}+ \int_{c+iT}^{d+iT} + \int_{d+iT}^{d-iT} + \int_{d-iT}^{c-iT}  }f(s) \mathrm{d}s = R_{0} + R_{-2k} + \mathfrak{R}(x),
\end{align}
where 
\begin{align}
\mathfrak{R}(x)= \begin{cases}\displaystyle
\sum_{j=1}^{k} R_{-(2j-1)}, & {\rm if} \,\,  k>0,  \\
0, & {\rm if} \,\,  k<0.  
\end{cases}
\end{align}
and $R_\rho$ denotes the residual term corresponding to the pole at $s=\rho$.  Now we evaluate the residual terms,  i.e., 
\begin{align}
 R_0=   \lim_{s \to 0}s f(s) &= \lim_{s\to 0} s \Gamma(s)\zeta(s)\zeta(s+2k+1)(1-p^{1-s})(1-p^{-s-2k-1})(2x)^{-s} \nonumber \\
    &= \zeta(0)\zeta(2k+1)(1-p)(1-p^{-2k-1})
    = \frac{(p-1)  \ls{2k+1}}{2},  \label{R_0}
\end{align}
\begin{align}
    R_{-2k} &= \lim_{s\to -2k}(s+2k)\Gamma(s)\zeta(s)\zeta(s+2k+1)(1-p^{1-s})(1-p^{-s-2k-1})(2x)^{-s} \nonumber \\
    &=(1-p^{1+2k})(1-p^{-1})\frac{\zeta^{'}(-2k) (2x)^{2k} }{(2k)!} \nonumber  \\
    &=(-1)^{k+1} (p-1)\bracket{\frac{px}{\pi}}^{2k} \frac{\ls{2k+1}}{2},  \label{R_{-2k}}
\end{align}
to obtain the final step we have used the identities $\zeta^{'}(-2k)= (-1)^k \frac{(2k)!}{2 (2\pi)^{2k}} \zeta(2k+1)$ and $L(2k+1, \chi_1) = \zeta(2k+1) (1-p^{-2k-1})$.  Again,  we calculate
{\allowdisplaybreaks
\begin{align}
\sum_{j=1}^{k} R_{-(2j-1)} 
&=\sum_{j=1}^{k}  \lim_{s\to - (2j-1)}(s + 2j-1)\Gamma(s)\zeta(s)\zeta(s+2k+1)(1-p^{1-s})(1-p^{-s-2k-1})(2x)^{-s}  \nonumber \\
     &= \sum_{j=1}^{k} \frac{(-1)^{2j-1}}{(2j- 1)!}\zeta(-2j+1)\zeta(2k-2j +2)(1-p^{2j})(1-p^{2j-2k-2})(2x)^{2j-1} \nonumber \\
     &=\sum_{j=1}^{k}  \frac{B_{2j}}{(2j)!}(1-p^{2j})(1-p^{2j-2k-2})\zeta(2k-2j+2)(2x)^{2j-1} \nonumber \\
     &=\sum_{j=1}^{k}  \frac{(-1)^{k+j}}{2} (1-p^{2j})(1-p^{2j-2k-2}) \frac{B_{2j}}{(2j)!}\frac{B_{2k-2j+2}  }{(2k-2j+2)!}(2\pi)^{2k-2j+2} (2x)^{2j-1}  \nonumber  \\
     & = \sum_{j=1}^{k} \frac{(-1)^{j} }{2} \left(p^{2j}-1 \right)  \left(p^{2k+2-2j}-1\right) \frac{B_{2j}}{(2j)!}\frac{B_{2k+2-2j}}{(2k+2-2j)!} (2x)^{2k+1} \left( \frac{\pi}{p x} \right)^{2j}. \label{R_(-2j+1)}
\end{align}}
To obtain the final expression,  in the penultimate step we have used Euler's formula \eqref{euler's identity} and in the ultimate step we replaced the variable $j$ by $k+1-j$.  
Now with the help of Stirling's formula \eqref{Stirling's formula} for $\Gamma(s)$ and  estimate for $\zeta(s)$,  one can show that the contribution of the horizontal integrals vanish as $T \rightarrow \infty$.  Thus,  letting $T \to \infty$ in \eqref{CRT},  we arrive at
\begin{align}\label{application of CRT}
\frac{1}{2\pi i} \int_{c-i\infty}^{c+i\infty} f(s) \mathrm{d}s = \frac{1}{2\pi i} \int_{d-i\infty}^{d+i\infty} f(s) \mathrm{d}s + R_{0} + R_{-2k} + \mathfrak{R}(x). 
\end{align}
At this moment,  one of our main goals is to simplify the following left vertical integral 
\begin{align}
J_{p,k}(x) :=  \frac{1}{2\pi i} \int_{d-i\infty}^{d+i\infty} \Gamma(s)\zeta(s)  \zeta(s+2k+1) (1-p^{1-s}) (1-p^{-s-2k-1}) (2x)^{-s} \mathrm{d}s,
\end{align}
where $d < \min\{0, -2k\}$.  In order to write this integral as an infinite series we will make a sutiable change of variable,  as discussed below.  Replace $s$ by $-s-2k$ to obtain
\begin{align}\label{left_vert}
J_{p,k}(x) & =  \frac{1}{2\pi i} \int_{(-d-2k)}\Gamma(-s-2k)\zeta(-s-2k)\zeta(1-s)(1-p^{1+s+2k})(1-p^{s-1})(2x)^{s+2k} \mathrm{d}s.
\end{align}
Here we note that the new line of integration $\Re(s)= -d-2k >\max\{1, -2k \}$ as we have considered $d < \min\{0, -2k\}$.  Now employing Lemma \ref{Use of functional equation} in \eqref{left_vert}  the left vertical integral $J_{p,k}(x) $ takes the shape as
\begin{align*}
 \bracket{\frac{p x}{\pi}}^{2k}  \frac{(-1)^k}{2\pi i} \int_{(-d-2k)} \Gamma(s)\zeta(s)\zeta(s+2k+1)(1-p^{1-s})(1-p^{-s-2k-1})\bracket{\frac{ 2 \pi^2}{p^2 x}}^{-s} \mathrm{d}s. 
\end{align*}
In view of \eqref{series_integral} and \eqref{right_vertical_integral},  we can clearly see that the above integral is equals to
\begin{align}
J_{p,k}(x) 
& = (-1)^k  \bracket{\frac{p x}{\pi}}^{2k} I_{p,k}\left( \frac{ \pi^2}{p^2 x}  \right) \nonumber \\
& = (-1)^k  \bracket{\frac{p x}{\pi}}^{2k} \summ{n=1}{\infty}{a_n \left( \sum_{d|n} \frac{\chi_1(d)}{d^{2k+1}} \right) }e^{- \frac{2n \pi^2}{p^2 x} }.  \label{Final_left_vertical}
\end{align}
Eventually,  combining \eqref{series_integral},  \eqref{application of CRT} and \eqref{Final_left_vertical} and collecting all the residual terms \eqref{R_0},  \eqref{R_{-2k}},   \eqref{R_(-2j+1)},  and substituting $ x = \alpha$ and $\alpha \beta = \pi^2/ p^2$ and simplifying further,  one can complete the proof of Theorem \ref{thm_gen_rama_analogue}. 
\end{proof}

\begin{proof}[Corollary {\rm \ref{cor_gen_lerch_analogue}}][]
We substitute $\al = \be = \frac{\pi}{p}$ and replace $k$ by $ 2k+1 $ as a positive odd integer in  Theorem \ref{thm_gen_rama_analogue} to complete the proof of this corollary.
\end{proof}

\begin{proof}[Corollary {\rm \ref{for k negative}}][]
Considering $k$ as a negative integer i.e.,  replacing $k$ by $-(k+1)$  in Theorem \ref{thm_gen_rama_analogue},  for $k\geq 0$,  we obtain
\begin{align}
   &(4 \al)^{k+1}\bracket{  \frac{p-1}{2}L(-2k-1,\chi_1)- \summ{n=1}{\infty} {a_n \bigg( \sum_{d|n} {\chi_1(d) d^{2k+1}} \bigg) }e^{-2n \al}  } \nonumber \\
    &  = (-4 \be)^{k+1}\bracket{  \frac{p-1}{2}L(-2k-1,\chi_1)- \summ{n=1}{\infty} {a_n \bigg( \sum_{d|n} {\chi_1(d) d^{2k+1}} \bigg) }e^{-2n \beta}  }. \nonumber 
\end{align}
Now using the fact that $L(-2k-1, \chi_1)= (p^{2k+1}-1) \frac{B_{2k+2}}{2k+2}$ and simplifying,  one can finish the proof.  
\end{proof}

\begin{proof}[Theorem {\rm \ref{thm_k=0}}][]
The proof of this identity goes along the same line as in Theorem \ref{thm_gen_rama_analogue},  so we will mention those important steps where it differs from the previous one.  In view of \eqref{series_integral} and \eqref{right_vertical_integral},  with $k=0$,  we can show that 
\begin{align}\label{right_vertical_k_0}
\summ{n=1}{\infty}{a_n \left( \sum_{d|n} \frac{\chi_1(d)}{d} \right) }e^{-2n x}  =  \frac{1}{2\pi i} \I{c} \Gamma(s)\zeta(s)(1-p^{1-s})L(s+1,\chi_1)(2x)^{-s} \mathrm{d}s,
\end{align}
where $c>1$ and $L(s+1,  \chi_1)= )\zeta(s+1)(1-p^{-s-1})$.  Thus,  in this case,  our integrand function is 
$$ f(s) = \Gamma(s)\zeta(s)\zeta(s+1)(1-p^{1-s})(1-p^{-s-1})(2x)^{-s}. $$ 
One can notice that the poles of $\Gamma(s)$ at negative even integers are getting neutralized by the trivial zeros of $\zeta(s)$,  and the poles of $\Gamma(s)$ at negative odd integers will get cancelled by the trivial zeros of $\zeta(s+1)(1-p^{-s-1})$.  Hence,  the only pole of the integrand function $f(s)$ is at $s=0$ of order $2$.  Now we would like to shift the line of integration to the left,  on line $\Re(s)=d$ with $d<0$.  Invoking Cauchy's residue theorem,  one can show that 
\begin{align}\label{CRT_k_0}
\frac{1}{2\pi i} \I{c} f(s)  \mathrm{d}s = \frac{1}{2\pi i} \I{d} f(s)  \mathrm{d}s + R_0 + R_1,
\end{align}
where $R_0$ and $R_1$ denote the residual terms corresponding to the poles at $s=0$ and $s=1$ respectively.  Now we jot down the Laurent series expansions of each term present in $f(s)$ around $s=0$,  namely,
 \begin{align*}
    \Gamma(s) &= \frac{1}{s} - \gamma + \frac{1}{2}\bracket{\gamma^2 + \frac{\pi^2}{6}} s + \cdots,\\
    \zeta(s)  &= -\frac{1}{2}+\zeta^{'}(0)s+\cdots,\\
    \zeta(s+1) &= \frac{1}{s} + \gamma -\gamma_1 s+ \cdots,\\
    (1-p^{1-s})&= 1-p + (p \log p)s - \frac{1}{2} p(\log p)^2  s^2 + \cdots,\\
    (1-p^{-s-1})&=\bracket{1-\frac{1}{p}}+ \bracket{\frac{\log p}{p}}s+ \cdots,\\
    (2x)^{-s}&= 1+ \bracket{\log\frac{1}{2x}}s + \frac{1}{2}\bracket{ \log \frac{1}{2x} }^2 s^2 + \cdots,
\end{align*}
where $\gamma$ is the Euler-Mascheroni constant, and $\gamma_n$ are the Stieltjes constants. Utilizing the above Laurent series expansions,  one can see that the following Laurent series expansion of $f(s)$ around $s=0$ holds,
 \begin{equation} \label{eq_laurent_for_f(s)}
     f(s) = \frac{(p-1)^2}{2p}\frac{1}{s^2}+ \frac{(p-1)^2}{2p}\log\bracket{\frac{\pi}{px}}\frac{1}{s}+ \cdots. 
 \end{equation}
This indicates that 

 \begin{equation}\label{residue_R_0}
        R_0= \lim_{s \rightarrow 0} \frac{{\rm d} }{ {\rm d}s}s^2 f(s) = \frac{(p-1)^2  }{2p}\log\bracket{\frac{\pi}{p x}} .
    \end{equation}
The simplification of the left vertical integral goes along the same vein as in Theorem \ref{thm_gen_rama_analogue},  so we won't repeat the calculation here.  Mainly,  in view of \eqref{Final_left_vertical},  we reach at 
\begin{align}\label{Final_left_k_0}
J_{p,0}(x) =   \frac{1}{2\pi i} \I{d} f(s)  \mathrm{d}s=  \summ{n=1}{\infty}{a_n \left( \sum_{d|n} \frac{\chi_1(d)}{d} \right) }e^{- \frac{2n \pi^2}{p^2 x} }.
\end{align} 
Ultimately,  combining \eqref{right_vertical_k_0},  \eqref{CRT_k_0},  \eqref{Final_left_k_0},  and together with the residual term $R_0$ in \eqref{residue_R_0},  and substituting $x=\alpha$ and $\alpha \beta =\pi^2/p^2$,  we settle the proof of \eqref{identity_k_0}.  
\end{proof}

\begin{proof}[Theorem {\rm \ref{thm_gen_grosswald_analogue}}][]
First,  we note that Theorem \ref{thm_gen_rama_analogue} can be analytically continued for $\Re(\alpha) >0,  \Re(\beta)>0$.   Multiplying by $(4\alpha)^k$ and substituting $\alpha = - i \pi z$ and $\beta= \frac{i \pi}{p^2 z}$ on both sides of Theorem \ref{thm_gen_rama_analogue} yields 
\begin{align*}
\frac{p-1}{2}L(2k+1,\chi_1) & \left\{ 1 + (-1)^{k+1} \left(\frac{\alpha}{\beta} \right)^k  \right\} = \mathfrak{F}_{2k+1, \chi_1}(z) + (-1)^{k+1} \left(\frac{\alpha}{\beta} \right)^k \mathfrak{F}_{2k+1,  \chi_1}\left(-\frac{1}{p^2 z} \right)  \nonumber \\
& + 2^{2k} \summ{j=1}{k}  {(-1)^{j-1}(p^{2j}-1)(p^{2k+2-2j}-1)\frac{B_{2j}}{(2j)!}\frac{B_{2k+2-2j}}{(2k+2-2j)!}\al^{2k+1-j}\be^j },
\end{align*}
where $ \mathfrak{F}_{k,\chi_1}(z)= \sum_{n=1}^\infty a_n \sigma_{-k,  \chi_1}(n) \exp( 2\pi i n z).$
Simplifying further, it turns out that
\begin{align*}
   \frac{p-1}{2}L(2k+1,\chi_1) & \left\{ 1 - \left( z p\right)^{2k}  \right\} = \mathfrak{F}_{2k+1, \chi_1}(z) - \left( z p\right)^{2k} \mathfrak{F}_{2k+1,  \chi_1}\left(-\frac{1}{p^2 z} \right)  \nonumber \\
   + & \frac{(2\pi i)^{2k+1}}{2z} \summ{j=1}{k}  {(1-p^{-2j})(p^{2k+2-2j}-1)\frac{B_{2j}}{(2j)!}\frac{B_{2k+2-2j}}{(2k+2-2j)!} }z^{2k+2-2j}. 
\end{align*}
Upon simplification,  one can complete the proof. 
\end{proof}

\begin{proof}[Proposition {\rm \ref{thm_gmr_one_alg}}][]
This result can be proved by the contradiction method.  Let us suppose that there are at least two distinct complex algebraic numbers $z_1$ and $z_2$,  satisfying $(zp)^{2k} \neq 1$,  such that $\mathfrak{G}_{2k+1,  \chi_1}(z_1)$ and $\mathfrak{G}_{2k+1,  \chi_1}(z_1)$ are both algebraic.  Now, Theorem \ref{thm_gen_grosswald_analogue} indicates that 
\begin{align}
\mathfrak{G}_{2k+1,  \chi_1}(z_1) & =  \frac{p-1}{2}L(2k+1,\chi_1)  +  \frac{(2 \pi i)^{2k+1} }{2 z_1 \, p^{2k+2}} \frac{ \mathfrak{R}_{2k+1, p}(z_1)}{ \left( (pz_1)^{2k}-1  \right)  },  \\
\mathfrak{G}_{2k+1,  \chi_1}(z_2) & =  \frac{p-1}{2}L(2k+1,\chi_1)  +  \frac{(2 \pi i)^{2k+1} }{2 z_2 \, p^{2k+2}} \frac{\mathfrak{R}_{2k+1, p}(z_2)}{ \left( (pz_2)^{2k}-1  \right) } . 
\end{align}
Subtracting the second equation from the first,  we get
\begin{align}
\mathfrak{G}_{2k+1,  \chi_1}(z_1)-\mathfrak{G}_{2k+1,  \chi_1}(z_2) = \frac{(2 \pi i)^{2k+1} }{2 p^{2k+2}} \left( \frac{\mathfrak{R}_{2k+1, p}(z_1)}{z_1\left( (pz_1)^{2k}-1  \right)} -  \frac{\mathfrak{R}_{2k+1, p}(z_2)}{z_2\left( (pz_2)^{2k}-1  \right)} \right). 
\end{align}
Note that the above right side expression is transcendental, since $\pi$ is transcendental and $\mathfrak{R}_{2k+1}(z)$ is algebraic for $z$ being an algebraic number,  whereas the left side expression is an algebraic number,  which arises a contradiction.  Thus, no such distinct $z_1, z_2$ exists.  This finishes the proof.  

\end{proof}

\begin{proof}[Theorem {\rm \ref{thm_gmr_exception}}][]
Let $z \in \mathbb{H}\cup \overline{\mathbb{Q}}$ with $(p z)^{2k} \neq 1$.  Proposition \ref{thm_gmr_one_alg} implies that the following set
$$ \{  \mathfrak{G}_{2k+1,\chi_1}(z) | \hspace{.1cm} z \in \mathbb{H} \cap \overline{ \mathbb{Q}}, \hspace{.2cm} (pz)^{2k} \neq 1 \} $$
contains at most one algebraic number.  Let us call that algebraic number as $A$.  Now we define a set
\begin{align}
S:= \left\{ z \in \mathbb{H} \cap \overline{ \mathbb{Q}}, \hspace{.2cm} (pz)^{2k} \neq 1 \big| \mathfrak{G}_{2k+1,\chi_1}(z) =A  \right\}.
\end{align}
Theorem \ref{thm_gen_grosswald_analogue} indicates that the elements of the set $S$ satisfy the following polynomial: 
\begin{align}
A \left\{ ( zp)^{2k}-1 \right\} = \frac{p-1}{2}L(2k+1,\chi_1)  \left\{ \left( z p\right)^{2k} -1  \right\} 
   + & \frac{(2\pi i)^{2k+1}}{ 2z \, p^{2k+2}} \mathfrak{R}_{2k+1, p}(z),
\end{align}
which is a polynomial of degree $2k$.  Therefore,  we have at most $2k$ algebraic numbers $z \in \mathbb{H}$ that are satisfying $(z p)^{2k}\neq 1$ for which $\mathfrak{G}_{2k+1,\chi_1}(z)$ is algebraic,  that is,  the quantity $ \left( z p\right)^{2k}   \mathfrak{F}_{2k+1,  \chi_1}\left(-\frac{1}{p^2 z} \right)  - \mathfrak{F}_{2k+1, \chi_1}(z)$ is algebraic. 

On the other hand,  we consider $z \in \mathbb{H} \cup \mathbb{\overline{\mathbb{Q}}}$ that are satisfying $(zp)^{2k}=1$.  
Now Theorem \ref{thm_gen_grosswald_analogue} gives us that
\begin{align*}
\left( z p\right)^{2k} &  \mathfrak{F}_{2k+1,  \chi_1}\left(-\frac{1}{p^2 z} \right)  - \mathfrak{F}_{2k+1, \chi_1}(z)= \frac{(2\pi i)^{2k+1}}{ 2z \, p^{2k+2}} \mathfrak{R}_{2k+1, p}(z). 
\end{align*}
Note that  the left side expression will be an algebraic number  only when $\mathfrak{R}_{2k+1, p}(z)=0$, otherwise it will arise a contradiction to the fact that $\pi$ is a transcendental number.  Here we employ Conjecture \ref{conjecture2},  that says,  if $2|k$,  the polynomials $\mathfrak{R}_{2k+1, p}(z)$ and $(pz)^{2k}-1$ have only one common root,  namely, $i/p$,  lying in the upper half plane.  Again,  if $2 \nmid k$,  they do not have any common roots.  This gives the clarification of defining $\delta_k$.  Therefore,  combining both of the cases,  we can clearly see that the expression 
$ \left( z p\right)^{2k}   \mathfrak{F}_{2k+1,  \chi_1}\left(-\frac{1}{p^2 z} \right)  - \mathfrak{F}_{2k+1, \chi_1}(z)$ is transcendental,   for any $z \in \mathbb{H}\cup
 \mathbb{\overline{\mathbb{Q}}}$ with at most $2k+\delta_k$ exceptions.  This ends the proof of this theorem.  

\end{proof}

\section{concluding remarks}

Inspired from Ramanujan's idea of proving his famous formula for $\zeta(2k+1)$,  in the current paper,  we have established a new Ramanujan-type formula for $L(2k+1,  \chi_1)$,  namely,  Theorem \ref{thm_gen_rama_analogue}.  And then motivated from Grosswald's generalization of Ramanujan's formula,  we have extended Theorem \ref{thm_gen_rama_analogue} in the upper half-plane and obtained Theorem \ref{thm_gen_grosswald_analogue},  which influenced us to define a new Ramanujan-type polynomial \eqref{new Ramanujan-type polynomila} as 
\begin{align}
\mathfrak{R}_{2k+1,p}(z) =  \summ{j=1}{k}  {(p^{2j}-1)(p^{2k+2-2j}-1)\frac{B_{2j}}{(2j)!}\frac{B_{2k+2-2j}}{(2k+2-2j)!} } (p z)^{2k+2-2j}. 
\end{align}
Replacing $z$ by  $z/p$,  one can check that the above polynomial $\mathfrak{R}_{2k+1,p}(z/p) $ is a reciprocal polynomial.  It is clear that Conjecture \ref{conjecture1} is equivalent of  proving that all the non-real roots $\mathfrak{R}_{2k+1,p}(z/p)$ will lie on $|z|=1$.   To know more about the location of zeros of self-reciprocal polynomials,  readers are encouraged to see  \cite{Chen95,  Lakatos02, Lakatos, Schinzel}.


\begin{table}
\caption{Numerical evidence for Conjecture \ref{conjecture1} and \ref{conjecture2}. In the below table, we have evaluated zeros of a few Ramanujan-type polynomial  $\mathfrak{R}_{2k+1,p}(z/p)$. 
  }
\label{Numerical_Zeros}
\renewcommand{\arraystretch}{1}
{\small
\begin{tabular}{|l|l|l|l|l|l|l}
\hline
  $p$ &  $k$ & $\mathfrak{R}_{2k+1,p}(z/p)$ & Real roots  & Non-real roots  \\
  \hline 
  $2$ & $1$  & $\frac{z^2}{16}$  & $ 0,0$  & - \\
  \hline
 $2$ &  $2$  & $ -\frac{z^2}{192} - \frac{z^4}{192}$   & $ 0,0$ & $ \pm i$  \\      
\hline
 $3$ & $4$ & $- \frac{41 z^2}{11340} - \frac{13 z^4}{4860} - \frac{13 z^6}{4860} - \frac{41 z^8}{11340} $  & $0,0$ & $\pm i,  \pm \sqrt{\frac{16}{123} \pm \frac{i\sqrt{14873}}{123}} $ \\
\hline
$5$ &  $3$ & $\frac{31 z^2}{30} + \frac{169 z^4}{225} + \frac{31 z^6}{30}$  & $0,0$ & $\pm \sqrt{-\frac{169}{465} \pm \frac{4 i \sqrt{11729}}{465}}$ \\
\hline
 $5$ & $4$ & $- \frac{4069 z^2}{6300} - \frac{403 z^4}{900} - \frac{403 z^6}{900} - \frac{4069 z^8}{6300} $  & $0,0$ & $ \pm i,  \pm \sqrt{\frac{48}{313}  \pm \frac{19\, i \sqrt{265}}{313}}$ \\
\hline
 $7$ & $3$ & $ \frac{1634 z^2}{105} + \frac{100 z^4}{9} + \frac{1634 z^6}{105} $  & $ 0,0 $  & $ \pm \sqrt{-\frac{875}{2451} \pm \frac{4 i \sqrt{327611}}{2451}}$ \\
\hline
$7$ & $4$ & $ -\frac{1201 z^2}{63} - \frac{817 z^4}{63} - \frac{817 z^6}{63} - \frac{1201 z^8}{63} $ & $ 0,0 $  & $ \pm i, \pm \sqrt{\frac{192}{1201} \pm \frac{i \sqrt{1405537}}{1201}}$\\        
\hline
 
\end{tabular}}

\end{table}

One can easily verify that all complex zeros of the polynomials, mentioned in the Table \ref{Numerical_Zeros}, are lying on $|z|=1$.  While collecting these numerical data we have observed that  Conjecture \ref{conjecture1} might be true,  even if we replace the prime number $p$ by any natural number $n \geq 2$.  

 The  Table  \ref{Numerical_Zeros} also indicates that, $\pm i$ are the only zeros of $\mathfrak{R}_{2k+1,p}(z/p)$ that are $2k$th roots of unity  when $k$ is divisible by $2$.  This  suggests that Conjecture \ref{conjecture2} is also valid for the polynomials mentioned in Table  \ref{Numerical_Zeros}.

\begin{table}[h]
\caption{Verification of Theorem \ref{thm_gen_rama_analogue}:  The left-hand side sum over $n$ considered only first $1000$ terms with specific values of $p$, $k$ and $\al$, $\be$.  We have used Mathematica software to evaluate this numerical data.}
\label{Table_Ramanujan cusp form}
\renewcommand{\arraystretch}{1}
{\small
\begin{tabular}{|l|l|l|l|l|l|}
\hline
  $p$ & $k$ &$\al$ &$\be$ & Left-hand side  & Right-hand side  \\
  \hline
$2$   &$1$ &$\frac{\pi}{2}$ &$\frac{\pi}{2}$  &$0.154212568767021... $  &$0.154212568767021...$\\
\hline
$2$   &$2$ &$1$ &$\frac{\pi^2}{4}$  &$0.018857641090379... $  &$0.018857641090379...$\\
\hline
$2$   &$2$ &$\pi$ &$\frac{\pi}{4}$  &$ -0.030279567070605... $  &$-0.030279567070605...$\\
\hline
$2$   &$3$ &$\frac{\pi}{2}$ &$\frac{\pi}{2}$  &$ 0.003699346990223... $  &$0.003699346990223...$\\
\hline
$2$   &$3$ &$\pi$ &$\frac{\pi}{4}$  &$ 0.010833801899940... $  &$0.010833801899940...$\\
  \hline
$3$   &$2$ &$\pi$ &$\frac{\pi}{9}$  &$ -0.226840615859532... $  &$-0.226840615859532...$\\
\hline
$3$   &$3$ &$\pi$ &$\frac{\pi}{9}$  &$ 0.161004035376020... $  &$0.161004035376020...$\\
\hline
$5$   &$1$ &$\frac{\pi}{5}$ &$\frac{\pi}{5}$  &$1.579136704174297... $  &$1.579136704174297...$\\
\hline
$5$   &$3$ &$\pi$ &$\frac{\pi}{25}$  &$ 3.915620336334279... $  &$3.915620336334286...$\\
\hline
\end{tabular}}

\end{table}


{\bf Acknowledgement:} The authors would like to thank Prof.  Bruce Berndt and Prof.  Atul Dixit for giving valuable suggestions.  We are also thankful to the Computational Number Theory (CNT) Lab, IIT Indore for providing conductive research environment. 
 The third author wants to thank SERB for the Start-Up Research Grant SRG/2020/000144. 

\newpage

\end{document}